\newtheorem{theorem}{Theorem}[section]
\newtheorem{proposition}[theorem]{Proposition}
\newtheorem{lemma}[theorem]{Lemma}
\theoremstyle{definition}
\newtheorem{definition}[theorem]{Definition}
\theoremstyle{remark}
\numberwithin{equation}{section}
\newcommand{\const}{\mathrm{const}\ }
\renewcommand{\epsilon}{\varepsilon}
\newcommand{\N}{\mathbb{N}}
\renewcommand{\phi}{\varphi}
\newcommand{\R}{\mathbb{R}}
\newcommand{\Z}{\mathbb{Z}}
\DeclareMathOperator{\supp}{supp}
\def\br{\mathbb{R}}
\def\bn{\mathbb{N}}
\def\bz{\mathbb{Z}}
\def\cl{\mathcal{L}}
\def\td{\mathrm{d}}
\def\dk{\mathrm{d}k}
\def\ds{\mathrm{d}s}
\def\dt{\mathrm{d}t}
\def\dx{\mathrm{d}x}
\def\dy{\mathrm{d}y}
\def\dz{\mathrm{d}z}
\newcommand{\abs}[1]{\left\lvert#1\right\rvert}
\newcommand{\norm}[1]{\left\lVert#1\right\rVert}
\newcommand{\me}[1]{\mathrm{e}^{#1}}
\newcommand{\one}{\mathbf{1}}
\newcommand*{\rom}[1]{\expandafter\@slowromancap\romannumeral #1@}
\begin{document}

\title[Generalized Hardy operators on $L^p$]{On Scales of Sobolev spaces\\
  associated to generalized Hardy operators}

\author{Konstantin Merz}
\address[Konstantin Merz]{Mathematisches Institut, Ludwig-Maximilans Universit\"at M\"unchen, Theresienstr. 39, 80333 M\"unchen, Germany. Address as of October 2019: Institut f\"ur Analysis und Algebra, Carolo-Wilhelmina, Universit\"atsplatz 2, 38106 Braunschweig, Germany}
\email{k.merz@tu-bs.de}

\subjclass[2010]{35A23, 46E35}
\keywords{Fractional Laplacian, Hardy inequality, Hardy operator,
  spectral multiplier theorem}

\date{November 4, 2020}

\begin{abstract}
  We consider the fractional Laplacian with Hardy potential and study the
  scale of homogeneous $L^p$ Sobolev spaces generated by this operator.
  Besides generalized and reversed Hardy inequalities, the analysis relies
  on a H\"ormander multiplier theorem which is crucial to construct a basic
  Littlewood--Paley theory.
  The results extend those obtained recently in $L^2$ but do not cover
  negative coupling constants in general due to the slow decay of the
  associated heat kernel.
\end{abstract}
\thanks{Deutsche Forschungsgemeinschaft grant SI 348/15-1 is gratefully
  acknowledged.}

\maketitle
\section{Introduction \& result}

\subsection*{Introduction}
The classical, sharp Hardy inequality
\begin{align*}
  \int_{\R^d}|\nabla f|^p\,\dx - \left(\frac{|d-p|}{p}\right)^p \int_{\R^d}\frac{|f(x)|^p}{|x|^p}\,\dx\geq0
\end{align*}
is one of the longest known inequalities relating the weighted $L^p$ norm of
a decaying function with the $L^p$ norm of its gradient and plays an
important role in fields such as mathematical physics, non-linear PDEs,
and harmonic analysis.
This inequality holds for all $f\in C_c^\infty(\R^d)$ if $1\leq p<d$ and
for all $f\in C_c^\infty(\R^d\setminus\{0\})$ if $p>d$. 
Herbst considered a generalization of the above inequality related to
the fractional Laplacian $(-\Delta)^{\alpha/2}$. Here, and in the following we
restrict ourselves to $\alpha\in(0,2)$. For $1<p<2d/\alpha$ the inequality
states
\begin{align}
  \label{eq:standardhardy}
  \| (-\Delta)^{\frac{\alpha}{4}}f\|_{L^p(\R^d)} - 2^{\frac{\alpha}{2}}\frac{\Gamma\left(\frac{d/p'+\alpha/2}{2}\right)\Gamma\left(\frac{d}{2p}\right)}{\Gamma\left(\frac{d/p-\alpha/2}{2}\right)\Gamma\left(\frac{d}{2p'}\right)}\| |x|^{-\frac{\alpha}{2}} f\|_{L^p(\R^d)} \geq0
  \quad\text{for all}\ f\in C_c^\infty(\R^d)
\end{align}
where the constant on the right side is sharp, see
\cite[Theorem 2.5]{Herbst1977}.
We emphasize that, for $p\neq2$, $\| (-\Delta)^{\alpha/4}f\|_p$ is
\emph{not} proportional to
$$
\left(\int_{\R^d}\int_{\R^d}\frac{|f(x)-f(y)|^p}{|x-y|^{d+\alpha p/2}}\,\dx\,\dy\right)^{1/p}\,.
$$
Instead, there is a one-sided inequality depending on whether $1<p<2$
or $p>2$, see, e.g., \cite[Chapter \rom{5}]{Stein1970} and also
Frank and Seiringer \cite{FrankSeiringer2008} concerning the sharp
fractional Hardy inequality involving this expression.
If
\begin{align*}
  -a_*:=\frac{2^\alpha\Gamma((d+\alpha)/4)^2}{\Gamma((d-\alpha)/4)^2}
\end{align*}
denotes the squared sharp constant for $p=2$, we define the
\emph{generalized Hardy operator}
$$
\cl_{a,\alpha}:=(-\Delta)^{\alpha/2}+a|x|^{-\alpha} \quad \text{in}\ L^2(\R^d)
$$
as the Friedrichs extension of the corresponding quadratic form on
$C_c^\infty(\R^d)$ for $a\geq a_*$.
For $d=3$ and $\alpha=1$, the optimal $a_*$ was already known to Kato
\cite[Chapter 5, Equation (5.33)]{Kato1966}. For general $d$ and $\alpha$
it was first computed by Herbst, but see also
\cite{Kovalenkoetal1981,Yafaev1999,Franketal2008H,FrankSeiringer2008} for
alternative proofs of the inequality with sharp constant.

Due to the homogeneity of $\cl_{a,\alpha}$ it is natural to ask whether the
operators with $a=0$ and $a\neq0$ are in some sense equivalent to each other.
For instance, one may ask whether they generate scales of homogeneous
Sobolev spaces which are comparable with each other, i.e., whether there
are $0<A<A'$ such that
$$
\|(-\Delta)^{\alpha s/4}f\|_{L^p(\R^d)} \leq A \|\cl_{a,\alpha}^{s/2}f\|_{L^p(\R^d)} \leq A'\|(-\Delta)^{\alpha s/4}f\|_{L^p(\R^d)}
$$
holds for certain $\alpha,a,s,p$.
For the Schr\"odinger operator, i.e., $\alpha=2$ and $d\geq3$, Killip et al.
\cite{Killipetal2018} proved that the norms are in fact equivalent to each
other for certain $a,s,p$.
This finding was recently generalized by Frank et al. \cite{Franketal2019} in
the case $p=2$ with general $\alpha\in(0,\min\{2,d\})$ and $a\geq a_*$.
The main objective of this paper is a generalization of their result to
$L^p(\R^d)$ with $p\neq2$.

Our results may be useful to study non-linear PDEs involving $\cl_{a,\alpha}$
in order to reduce problems to those involving only $|p|^\alpha$, i.e.,
without the Hardy potential. For $\alpha=2$, the corresponding result was
used, e.g., by Killip et al. \cite{Killipetal2017T,Killipetal2017} to
determine the threshold between scattering and finite-time blowup of the
focusing cubic nonlinear Schr\"odinger equation with Hardy
potential, or the well-posedness of the energy-critical nonlinear
Schr\"odinger equation with Hardy potential.

\medskip
Before proceeding to the main result, we introduce some
notation that is used throughout the rest of this paper.\\
(1) We write $X\lesssim Y$ for non-negative quantities $X$ and $Y$, whenever
there is a positive constant $A$ such that $X\leq A\cdot Y$.
If $A$ depends on some parameter $r$, we sometimes write $X\lesssim_r Y$.
Moreover, $X\sim Y$ means $Y\lesssim X\lesssim Y$ and in this case, we say
that $X$ is \emph{equivalent} to $Y$.\\
(2) We define $X\wedge Y:=\min\{X,Y\}$ and $X\vee Y:=\max\{X,Y\}$.\\
(3) The integer part of $x\in\R$ is denoted by $[x]:=\max\{k\in\Z:k\leq x\}$.
The positive part is denoted by $x_+=\max\{x,0\}$.\\
(4) For $1\leq p\leq\infty$, we abbreviate $\|f\|_p=\|f\|_{L^p(\R^d)}$ and
denote H\"older conjugate exponents by primes, i.e., $p^{-1}+p'^{-1}=1$.
\\ 
(5) The $s$-th $L^2$ potential space ($s\in\R$) is denoted by $H^{s}(\R^d)$.
It consists of all functions $f$ such that the norm
$\|f\|_{H^s}:=\|(1-\Delta)^{s/2}f\|_{L^2}$ is finite where $(1-\Delta)^{s/2}$
denotes the operator which is given by multiplication with
$(1+4\pi^2|\xi|^2)^{s/2}$ in Fourier space.
Moreover, $f\in H^s_{\mathrm{loc}}(\R^d)$ if and only if
$\|\phi f\|_{H^s}<\infty$ for all $\phi\in C_c^\infty(\R^d)$.
\\
(6) We abbreviate $|p| = \sqrt{-\Delta}$.

\subsection*{Main result and strategy of the proof}

Let us recall the following parameterization of the coupling constant
in terms of the power of the formal ground state of $\cl_{a,\alpha}$,
namely
\begin{align}
  \label{eq:defpsi}
  \Psi_{\alpha,d}(\sigma) := -2^\alpha \frac{\Gamma(\frac{\sigma+\alpha}{2})\ \Gamma(\frac{d-\sigma}{2})}{\Gamma(\frac{d-\sigma-\alpha}{2})\ \Gamma(\frac{\sigma}{2})}
  \qquad\text{if}\ \sigma\in (-\alpha,(d-\alpha)/2]\setminus\{0\}
\end{align}
and $\Psi_{\alpha,d}(0)=0$. According to \cite[Lemma 3.2]{Franketal2008H} and
\cite[p. 8]{JakubowskiWang2020}, the function
$\sigma\mapsto \Psi_{\alpha,d}(\sigma)$ is continuous and strictly decreasing
in $(-\alpha,(d-\alpha)/2]$ with
$$
\lim_{\sigma\to -\alpha} \Psi_{\alpha,d}(\sigma) = \infty
\qquad\text{and}\qquad
\Psi_{\alpha,d}\left(\frac{d-\alpha}{2}\right) = a_* \,.
$$
Consequently, for any $a\geq a_*$, we may define
\begin{equation}
  \label{eq:defdelta}
  \delta := \Psi_{\alpha,d}^{-1}(a)
\end{equation}
which allows us to formulate our main theorem on the equivalence of
$L^p$ Sobolev norms generated by powers of $\cl_{a,\alpha}$.
\begin{theorem}[Equivalence of Sobolev norms on $L^p(\R^d)$]
  \label{Thm:A.1}
  Let $d\in\N$, $0<\alpha<2\wedge d$, and $s\in(0,2]$.
  Let $a\geq a_*$ if $s=2$ and $a\geq0$ if $s\in(0,2)$.
  Let furthermore $\delta$ be defined by \eqref{eq:defdelta}.
  \begin{enumerate}
  \item If $1<p<\infty$ satisfies $\alpha s/2+\delta<d/p<\min\{d,d-\delta\}$,
    then
    $$
    \| |p|^{\alpha s/2}f\|_{L^p(\R^d)} \lesssim_{d,p,\alpha,s} \|\cl_{a,\alpha}^{s/2}f\|_{L^p(\R^d)}
    \quad \text{for all}\ f\in C_c^\infty(\R^d)\,.
    $$
    
  \item
    If $\alpha s/2<d/p<d$ (which already ensures $1<p<\infty$), then
    $$
    \| \cl_{a,\alpha}^{s/2}f\|_{L^p(\R^d)} \lesssim_{d,p,\alpha,s} \| |p|^{\alpha s/2}f\|_{L^p(\R^d)}
    \quad \text{for all}\ f\in C_c^\infty(\br^d).
    $$
  \end{enumerate}
\end{theorem}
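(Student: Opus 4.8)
The plan is to exploit the explicit heat kernel / resolvent structure of $\cl_{a,\alpha}$ together with a Hörmander-type multiplier theorem to control the operators $\cl_{a,\alpha}^{s/2}|p|^{-\alpha s/2}$ and $|p|^{\alpha s/2}\cl_{a,\alpha}^{-s/2}$ on $L^p$. Both inequalities should follow by proving that these are bounded operators on the relevant range of $p$; by density it suffices to work on $C_c^\infty$ and, by the usual extrapolation/interpolation machinery, it is enough to establish appropriate endpoint bounds or weak-type estimates. The key structural input is that $\cl_{a,\alpha}$ generates a heat semigroup whose kernel $p_t^{(a)}(x,y)$ satisfies sharp two-sided (or at least upper) Gaussian-type bounds twisted by the ground-state weight $|x|^{-\delta}$, namely $p_t^{(a)}(x,y)\lesssim (1\wedge t^{1/\alpha}/|x|)^\delta(1\wedge t^{1/\alpha}/|y|)^\delta \, t^{-d/\alpha}(1+t^{-1/\alpha}|x-y|)^{-d-\alpha}$, as recorded by Jakubowski–Wang and Bogdan et al. From such bounds one reads off that $\me{-t\cl_{a,\alpha}}$ is bounded on $L^p$ precisely when $d/p$ lies strictly between $\delta_+$ and $d-\delta_+$ (this is where the hypotheses $d/p<\min\{d,d-\delta\}$ and $\alpha s/2+\delta<d/p$ enter), and more importantly one gets the Davies–Gaffney / finite-propagation-type off-diagonal decay needed to run a Littlewood–Paley argument adapted to $\cl_{a,\alpha}$.

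The steps, in order. (i) Record the weighted heat kernel bounds for $\cl_{a,\alpha}$ and deduce $L^p$–$L^p$ and $L^p$–$L^q$ bounds for $\me{-t\cl_{a,\alpha}}$ and its time-derivatives on the admissible $p$-range; in particular verify that for $a\ge0$ one has $\delta\le 0$ so that the weight only helps near the origin, whereas for $a<0$ (excluded when $s=2$ has $a\ge a_*$, allowed only through $s\in(0,2)$ with $a\ge0$) the slow decay would destroy the argument — this is exactly the restriction flagged in the abstract. (ii) Use subordination, $\cl_{a,\alpha}^{s/2}=c_s\int_0^\infty (1-\me{-t\cl_{a,\alpha}})\,t^{-1-s/2}\,\dt$ (and the analogous formula for $|p|^{\alpha s/2}$, recalling $|p|^\alpha=(-\Delta)^{\alpha/2}$), to reduce the two norm comparisons to mapping properties of the semigroups. (iii) Prove a Hörmander–Mikhlin multiplier theorem for $\cl_{a,\alpha}$: a bounded function $m$ with sufficiently many bounded "derivatives" in the Mellin/spectral sense gives a bounded operator $m(\cl_{a,\alpha})$ on $L^p$ for $p$ in the admissible range — this follows from the heat kernel bounds via the standard Blunck–Kunstmann / Duong–McIntosh singular-integral scheme, using the off-diagonal bounds for $\psi(t\cl_{a,\alpha})$ with $\psi$ Schwartz and vanishing at $0$. (iv) Apply the multiplier theorem: for part (1), write $|p|^{\alpha s/2}f = (|p|^{\alpha s/2}\cl_{a,\alpha}^{-s/2})\,\cl_{a,\alpha}^{s/2}f$ and show the multiplier-type operator $|p|^{\alpha s/2}\cl_{a,\alpha}^{-s/2}$ is $L^p$-bounded; for part (2) do the same with the roles reversed, noting the milder hypothesis $\alpha s/2<d/p<d$ reflects that $\cl_{a,\alpha}^{s/2}|p|^{-\alpha s/2}$ is better behaved when $a\ge0$. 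The homogeneity $\cl_{a,\alpha}(\lambda\,\cdot)=\lambda^{-\alpha}\cl_{a,\alpha}$ should be used to renormalize and restrict attention to $t\sim 1$. A complementary ingredient is the generalized and reversed Hardy inequalities of Herbst/Frank et al., which give the base case $s=\alpha$-type comparison and pin down the admissible exponent window.

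The main obstacle will be step (iii) combined with the precise exponent bookkeeping: establishing the Hörmander multiplier theorem in the absence of a true pointwise Gaussian bound (we only have the weighted bound with the $|x|^{-\delta}$ singularity, which is not in the classical Gaussian class and fails the usual doubling-with-Gaussian framework near the origin), so one must run the Blunck–Kunstmann generalized-Calderón–Zygmund argument with the weighted off-diagonal estimates and carefully track the range of $p$ for which the weight $|x|^{-\delta}$ is harmless — this is where the constraints $\alpha s/2+\delta<d/p<\min\{d,d-\delta\}$ come from, and checking that the two operators $|p|^{\alpha s/2}\cl_{a,\alpha}^{-s/2}$ and its inverse satisfy the Hörmander condition uniformly requires analyzing the asymptotics of the relevant spectral function (essentially a ratio of Gamma functions coming from $\Psi_{\alpha,d}$) at both $0$ and $\infty$. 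I expect the reversed inequality (part (2)) to be the easier of the two since there the Hardy potential is a "lower order" perturbation, while part (1) needs the full strength of the reversed Hardy inequality together with the multiplier theorem.
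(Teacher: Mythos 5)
Your proposal correctly identifies the ingredients (weighted heat kernel bounds, a H\"ormander multiplier theorem for $\cl_{a,\alpha}$, Littlewood--Paley theory, and generalized/reversed Hardy inequalities) but mis-assembles them at the decisive step, and this gap is not cosmetic.

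The central problem is step (iv). You propose to write $|p|^{\alpha s/2}f = (|p|^{\alpha s/2}\cl_{a,\alpha}^{-s/2})\cl_{a,\alpha}^{s/2}f$ and to control the factor $|p|^{\alpha s/2}\cl_{a,\alpha}^{-s/2}$ by the H\"ormander multiplier theorem, and similarly for the reversed direction. But $|p|^{\alpha s/2}\cl_{a,\alpha}^{-s/2}$ is \emph{not} a spectral multiplier of either $\cl_{a,\alpha}$ or $|p|^\alpha$: the two operators do not commute when $a\neq0$, so this composition is not of the form $F(\cl_{a,\alpha})$ for any function $F$, and the multiplier theorem simply does not apply to it. The paper's actual route avoids this obstacle entirely. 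The multiplier theorem is used \emph{only} to establish Bernstein estimates and two-sided square function estimates for $\cl_{a,\alpha}$ alone (and for $|p|^\alpha$). The comparison between the two Sobolev scales is then carried out by a triangle inequality at the level of square functions: one bounds
\begin{equation*}
\left\|\Big(\sum_{N}|N^{\alpha s/2}P_N^\alpha f|^2\Big)^{1/2}-\Big(\sum_{N}|N^{\alpha s/2}P_N^{a,\alpha} f|^2\Big)^{1/2}\right\|_p \lesssim \||x|^{-\alpha s/2}f\|_p
\end{equation*}
using pointwise bounds on the \emph{difference} of the two heat kernels $\me{-t|p|^\alpha}(x,y)-\me{-t\cl_{a,\alpha}}(x,y)$ (Lemma \ref{differencekernel}), and then the right side is absorbed by the generalized or ordinary Hardy inequality. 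These Hardy inequalities are therefore not a ``complementary ingredient'' as you describe them---together with the difference-of-kernels estimate they are the mechanism that compares the two scales, precisely because no multiplier theorem can handle the mixed operator.

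A second, smaller gap is in step (iii). The Blunck--Kunstmann / Duong--McIntosh framework you invoke rests on Gaussian or at least Davies--Gaffney off-diagonal estimates, equivalently finite propagation speed for the associated wave equation. For $\alpha<2$ the heat kernel of $\cl_{a,\alpha}$ decays only algebraically and $\cos(t\sqrt{\cl_{a,\alpha}})$ does not have compact support, so that scheme fails; this is exactly why the paper instead appeals to Hebisch's result for slowly decaying kernels and has to check a separate H\"older-type cancellation condition on the kernel (and why only $a\geq0$ is handled for $s<2$). Finally, for $s=2$ the paper bypasses all of this: the claim follows from the triangle inequality together with the ordinary and generalized Hardy inequalities alone, without any Littlewood--Paley machinery, which is why $a\geq a_*$ (not just $a\geq0$) is allowed there.
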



We remark that for $p=2$, an equivalence of Sobolev norms for some
$s\in(0,2]$ and $a\geq a_*$ yields, by the spectral theorem and the
operator monotonicity of positive roots, an equivalence of norms for
any $0<t<s$ with the same $a$, see also
\cite[Remarks 1.2 and 1.3]{Franketal2019}.
If $p\neq2$, this assertion is far from obvious.

Let us now outline the strategy of the proof.
For $s=2$, the assertion follows immediately from the ordinary Hardy
inequality \eqref{eq:standardhardy} and a \emph{generalized Hardy
  inequality} which is why we can also handle $a<0$ in this case.

\begin{proposition}[Generalized Hardy inequality]
  \label{genhardy}
  Let $1<p<\infty$, $\alpha\in(0,2\wedge d)$, $a\geq a_*$, $\delta$ be
  defined by \eqref{eq:defdelta}, and $\alpha s/2\in(0,d)$.
  If $s$ and $p$ satisfy $\alpha s/2+\delta<d/p<d-\delta$, then
  \begin{align}
    \label{eq:genhardy}
    \| |x|^{-\alpha s/2}f\|_p \lesssim_{d,\alpha,a,s,p} \|\cl_{a,\alpha}^{s/2}f\|_p 
    \quad \text{for all}\ f\in C_c^\infty(\br^d) \,.
  \end{align}
  Conversely, if $\alpha s/2\in(0,\min\{d,d-2\delta\})$ and the above
  estimate holds, then $\alpha s/2+\delta<d/p<d-\delta$.
\end{proposition}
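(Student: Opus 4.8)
The plan is to rephrase \eqref{eq:genhardy} as the $L^p(\R^d)$-boundedness of the operator $T_s:=|x|^{-\alpha s/2}\cl_{a,\alpha}^{-s/2}$; by a routine density argument (using that for $a\ge a_*$ the semigroup $e^{-t\cl_{a,\alpha}}$ is positivity preserving and that $C_c^\infty(\br^d)$ is a form core) the two statements are equivalent. Writing $G_s(x,y)\ge 0$ for the kernel of $\cl_{a,\alpha}^{-s/2}$, the scaling $\cl_{a,\alpha}(f(\lambda\,\cdot))=\lambda^\alpha(\cl_{a,\alpha}f)(\lambda\,\cdot)$ forces $G_s(\lambda x,\lambda y)=\lambda^{\alpha s/2-d}G_s(x,y)$, so the kernel $K_s(x,y):=|x|^{-\alpha s/2}G_s(x,y)$ of $T_s$ is a nonnegative, rotation-invariant kernel that is homogeneous of degree exactly $-d$. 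For such kernels $L^p$-boundedness is governed by a single scalar integral, so the whole proof reduces to pinning down $G_s$.

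The first real step is the two-sided bound
$$
G_s(x,y)\sim |x-y|^{\alpha s/2-d}\,h\!\left(\tfrac{|x|}{|x-y|}\right)h\!\left(\tfrac{|y|}{|x-y|}\right),\qquad h(r)\sim 1\ (r\gtrsim 1),\quad h(r)\sim r^{-\delta}\ (r\to 0),
$$
valid for $a\ge a_*$ whenever $\alpha s/2\in(0,\min\{d,d-2\delta\})$. For $s=2$ this is the Green-function estimate for $\cl_{a,\alpha}$ of Bogdan--Grzywny--Jakubowski / Jakubowski--Wang (and the $s=2$ case of \eqref{eq:genhardy} also follows directly from \eqref{eq:standardhardy}, as already indicated). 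For $s\in(0,2)$ I would derive it from the subordination formula $\cl_{a,\alpha}^{-s/2}=\Gamma(s/2)^{-1}\int_0^\infty t^{s/2-1}e^{-t\cl_{a,\alpha}}\,\dt$ together with the two-sided heat-kernel bounds for $e^{-t\cl_{a,\alpha}}$, splitting the $t$-integral at the scales $|x|^\alpha,|y|^\alpha,|x-y|^\alpha$ and tracking the ground-state factors; a logarithmic correction appears at $a=a_*$ but is immaterial because the admissible range of $p$ is open. In particular, for fixed $y_0\ne 0$, $G_s(x,y_0)\sim|x|^{-\delta}$ as $x\to 0$, $\sim|x|^{\alpha s/2+\delta-d}$ as $|x|\to\infty$, with the local singularity $|x-y_0|^{\alpha s/2-d}$.

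With this in hand, since $K_s$ is a nonnegative rotation-invariant kernel homogeneous of degree $-d$, passing to the variables $x=e^u\omega$ ($u=\log|x|$, $\omega=x/|x|$) and conjugating $T_s$ by the isometry $f\mapsto e^{du/p}f(e^u\omega)$ of $L^p(\R^d)$ onto $L^p(\R\times\Sph^{d-1})$ turns $T_s$ into convolution on $\R$ with values in operators on $L^p(\Sph^{d-1})$; Young's inequality and Schur's test on $\Sph^{d-1}$ then bound $\|T_s\|_{L^p\to L^p}$ by
$$
C_p:=\int_{\R^d}K_s(e_1,y)\,|y|^{-d/p}\,\dy=\int_{\R^d}G_s(e_1,y)\,|y|^{-d/p}\,\dy .
$$
Using the three regimes of $G_s$, this integral converges near $y=e_1$ because $\alpha s/2>0$, near $y=0$ exactly when $d/p<d-\delta$, and near $y=\infty$ exactly when $d/p>\alpha s/2+\delta$; hence $C_p<\infty$ precisely when $\alpha s/2+\delta<d/p<d-\delta$ (which already entails $\alpha s/2<d-2\delta$, so the kernel estimate applies), and \eqref{eq:genhardy} follows. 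For the converse, if \eqref{eq:genhardy} holds then $T_s$ is bounded on $L^p$, so $T_s\one_{\{1/2<|y|<2\}}\in L^p$; inserting the \emph{lower} bounds $G_s(x,y)\gtrsim|x|^{-\delta}$ for small $|x|$ and $G_s(x,y)\gtrsim|x|^{\alpha s/2+\delta-d}$ for large $|x|$ (uniformly for $y$ in that shell) gives $T_s\one_{\{1/2<|y|<2\}}(x)\gtrsim|x|^{-\alpha s/2-\delta}$ near $0$ and $\gtrsim|x|^{\delta-d}$ near $\infty$, and $L^p$-summability of these forces $d/p>\alpha s/2+\delta$ and $d/p<d-\delta$ respectively.

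I expect the main obstacle to be the $s\in(0,2)$ case of the kernel estimate for $\cl_{a,\alpha}^{-s/2}$: extracting sharp two-sided bounds from the heat-kernel bounds, where the slow (merely polynomial) decay of the fractional heat kernel and the interplay of the three length scales $|x|,|y|,|x-y|$ with the ground-state exponent $\delta$ must be handled carefully — and it is precisely here that the hypothesis $a\ge a_*$ (and, for $s<2$, the need to restrict $a$ further) makes itself felt. Everything downstream — the homogeneous-kernel boundedness criterion and the testing argument for necessity — is soft by comparison.
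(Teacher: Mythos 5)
Your proposal is correct and follows essentially the same route as the paper: both reduce \eqref{eq:genhardy} to the $L^p$-boundedness of the homogeneous-of-degree-$(-d)$ kernel $|x|^{-\alpha s/2}\cl_{a,\alpha}^{-s/2}(x,y)$ and feed in the two-sided Riesz-kernel estimate \eqref{eq:rieszhardy} (which the paper simply quotes from \cite[Theorem~1.6]{Franketal2018}, while you sketch its derivation via subordination from the heat-kernel bounds of Theorem~\ref{heatkernel}). Where the paper invokes a weighted Schur test as in \cite[Proposition~3.2]{Killipetal2018} and the same counterexamples for the converse, you instead conjugate to $L^p(\R\times\Sph^{d-1})$, use operator-valued Young on $\R$ to reduce to the scalar Mellin integral $\int G_s(e_1,y)|y|^{-d/p}\,\dy$, and for the converse test against $\one_{\{1/2<|y|<2\}}$; these are equivalent formulations of the same homogeneous-kernel criterion, so the substance is the same. (One small imprecision: the logarithmic degeneracy of the Riesz-kernel bound occurs at $\alpha s/2 = d - 2\delta$, not specifically at $a=a_*$; but since the hypothesis $\alpha s/2 + \delta < d/p < d-\delta$ forces $\alpha s/2 < d-2\delta$ strictly, this never matters.)
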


\begin{proof}
  The assertion is equivalent to the $L^p$-boundedness of the
  operator $|x|^{-\alpha s/2}\cl_{a,\alpha}^{-s/2}$. Using the pointwise
  bounds on the Riesz kernel of $\cl_{a,\alpha}$ (see
  \cite[Theorem 1.6]{Franketal2019}), i.e.,
  \begin{align}
    \label{eq:rieszhardy}
    \cl_{a,\alpha}^{-s/2}(x,y) \sim_{d,\alpha,a,s} |x-y|^{\alpha\frac{s}{2}-d}\left(1\wedge\frac{|x|}{|x-y|}
    \wedge\frac{|y|}{|x-y|}\right)^{-\delta}
    \quad \text{for}\ \frac{\alpha s}{2}\in(0,d\wedge d-2\delta)\,,
  \end{align}
  this follows from the $L^p$-boundedness of the operator whose integral
  kernel is the above kernel multiplied by $|x|^{-\alpha s/2}$ which in turn
  is proven by performing a Schur test. As the involved computations are
  analogous those in \cite[Proposition 3.2]{Killipetal2018} (with $\alpha s/2$
  instead of $\alpha$ and $\delta$ instead of $\sigma$), respectively
  \cite[Proposition 1.4]{Franketal2019}, we omit a proof.

  The fact that \eqref{eq:genhardy} fails for $d/p\leq \alpha s/2 + \delta$
  or $d/p\geq d-\delta$ follows from the lower bound in \eqref{eq:rieszhardy}
  and the same counterexamples as in \cite[Proposition 3.2]{Killipetal2018}.
\end{proof}

\medskip

If $s<2$, the proof is a bit more laborious.
Still, the idea is to use the triangle inequality, obtain an estimate like
\begin{align}
  \label{eq:reversehardywish}
  \||p|^{\alpha s/2}f\|_p
  \leq \|(\cl_{a,\alpha}^{s/2}-|p|^{\alpha s/2})f\|_p + \| \cl_{a,\alpha}^{s/2} f\|_p
  \lesssim\||x|^{-\alpha s/2}f\|_p + \| \cl_{a,\alpha}^{s/2} f\|_p\,,
\end{align}
and then apply the generalized Hardy inequality.
For $p=2$, the second inequality in \eqref{eq:reversehardywish} was called a
\emph{reversed Hardy inequality} (cf. \cite[Proposition 1.5]{Franketal2019}),
because it yields a lower bound on the norm of $|x|^{-\alpha s/2}f$ in terms
of the difference $(\cl_{a,\alpha}^{s/2}-|p|^{\alpha s/2})f$.
There, \eqref{eq:reversehardywish} was proven using the spectral theorem, i.e.,
\begin{align*}
  \Gamma\left(\frac s2\right) \|\cl_{a,\alpha}^{s/2}f\|_2
  & = \left\|\int_0^\infty\frac{\dt}{t}t^{-\frac s2}(1-\me{-t\cl_{a,\alpha}})f\right\|_2\\
  & \leq \left\|\int_0^\infty\frac{\dt}{t}t^{-\frac s2}(\me{-t|p|^\alpha}-\me{-t\cl_{a,\alpha}})f\right\|_2
  + \left\|\int_0^\infty\frac{\dt}{t}t^{-\frac s2}(1-\me{-t|p|^\alpha})f\right\|_2\,,
\end{align*}
thereby rewriting the difference $\|(\cl_{a,\alpha}^{s/2}-|p|^{\alpha s/2})f\|_2$
directly in terms of the difference of the associated heat kernels.
However, due to the lack of a spectral theorem in $L^p$, we will first 
express $\|\cl_{a,\alpha}^{s/2}f\|_p$ and $\||p|^{\alpha s/2}f\|_p$
in terms of Littlewood--Paley square functions employing two-sided
\emph{square function estimates}, Theorem \ref{squarefunctions}.
The corresponding Littlewood--Paley projections will be defined via the
heat kernels of $\cl_{a,\alpha}$ and $|p|^\alpha$, because we have good
pointwise bounds on the individual kernels and on their difference,
Theorem \ref{heatkernel} and Lemma \ref{differencekernel}.
The latter bounds then allow us to prove a reversed Hardy inequality
expressed in terms of these square functions, thereby yielding an
analog of \eqref{eq:reversehardywish}, see Proposition \ref{reversehardylp}.

The proof of the square function estimates, however, crucially depends on
the $L^p$-boundedness of certain functions of $\cl_{a,\alpha}$.
In $L^2$ it follows from the spectral theorem that measurable, bounded
functions of self-adjoint operators are bounded on $L^2$.
The $L^p$-boundedness of functions of such operators (which may initially
be defined by the $L^2$ functional calculus), however, relies on much stronger
regularity assumptions on the multiplier and specific knowledge of the operator
in question.
Here, we discuss two instances of such spectral multiplier
theorems which differ in the conditions on the multiplier.
On the one hand, Mikhlin multiplier theorems \cite{Mihlin1956} require that the
multiplier $m$ is at least $s$ times continuously differentiable and
satisfies the Mikhlin condition
\begin{align*}
  |\lambda^{j}\partial_\lambda^j m(\lambda)| \lesssim_j 1 \quad \text{for all}\ j=0,...,s\,.
\end{align*}
On the other hand, H\"ormander multiplier theorems 
rely on the condition that the multiplier $F$ belongs to
$H^s_{\mathrm{loc}}(\R)$ for some sufficiently large $s>0$. Moreover,
for a non-zero $\phi\in C_c^\infty(\R_+)$, the H\"ormander condition
\begin{align*}
  \sup_{t>0}\|\varphi(\cdot) F(t\cdot)\|_{H^{s}}<\infty
\end{align*}
must be satisfied. This reveals in particular that H\"ormander
multiplier theorems imply Mikhlin multiplier theorems.
It is known that $s>d/2$ suffices to prove a Mikhlin or a H\"ormander
multiplier theorem for Fourier multipliers, see
\cite[Chapter \rom{4}, \S 3, Theorem 3]{Stein1970} and
\cite{Hormander1960}.

There is a broad literature on the derivation of spectral multiplier theorems.
However, these usually rely on the assumption that the corresponding heat
kernel satisfies pointwise Gaussian estimates
\cite{Hebisch1990,Hebisch1990A,Duongetal2002,Chenetal2016} or
so-called generalized Gaussian estimates \cite{Blunck2003}. The kernel may
even have local singularities, like the one of $-\Delta+a|x|^{-2}$ for $a<0$,
see, e.g., Milman and Semenov \cite{MilmanSemenov2004}.
For a survey on spectral multiplier theorems for operators with Gaussian
heat kernel bounds, we refer to Duong et al. \cite{Duongetal2001} and the
references therein.
Using the maximum principle and the exponential decay of $\exp(\Delta)$,
Hebisch \cite{Hebisch1990} derived a multiplier theorem for Schr\"odinger
operators $-\Delta+V$ in $L^2(\R^d)$ when $V\geq0$. Unlike in an earlier
work \cite{Hebisch1990A} where the heat kernel needed to satisfy a certain
H\"older condition, the proof relies on decent $L^2$ estimates and is
based on a clever dyadic decomposition of the multiplier.
Naturally, the maximum principle can also be invoked for
$\exp(-(|p|^\alpha +V))$ with $\alpha\in(0,2)$ and $V\geq0$. However, due
to the slow, i.e., algebraic decay of $\exp(-|p|^\alpha)$ it is considerably
more difficult to show a multiplier theorem also in this case.
Using similar techniques as in \cite{Hebisch1990}, it is however possible
to prove a H\"ormander multiplier theorem for $|p|^\alpha+V$, at least in the
special case $d=1$ and $\alpha>1$, see \cite[Theorem 3.8]{Hebisch1995}.
The reason for this restriction is the slow decay of the heat kernel which
makes it difficult to deduce radial, integrable upper bounds of functions of
$\cl_{a,\alpha}$, even if these functions are smooth and compactly supported.
The existence of such upper bounds is, however, vital to make use of a
well-known property of the Hardy--Littlewood maximal function in order to
conclude the proof.
Let us also point out to a recent work of
Chen et al. \cite{Chenetal2020} who proved multiplier theorems for abstract
self-adjoint operators whose methods do not rely on a-priori heat kernel bounds.
In particular, they obtain a multiplier theorem for $|p|^\alpha+V$ with
$V\geq0$, however again, only in $d=1$ and with $\alpha>1$, see
\cite[Section 5.3]{Chenetal2020} and their Theorem 3.1 and the subsequent
corollary.
In any case, these results are however not applicable in our
situation since we are requiring $\alpha<d$.

Nonetheless, it is possible to establish a spectral multiplier theorem
associated to $\cl_{a,\alpha}$ in two special cases.
On the one hand, a simple computation using the pointwise bounds on
$\me{-\cl_{a,\alpha}}(x,y)$ shows that the heat kernel is bounded on $L^p$
for all $a\geq a_*$, see Lemma \ref{bernstein}.
On the other hand, based on an abstract result by Hebisch \cite{Hebisch1995},
we prove a H\"ormander multiplier theorem for $\cl_{a,\alpha}$ if $a\geq0$.
In fact, we obtain the following result.

\begin{theorem}
  \label{cancellationexa}
  Let $d\in\N$, $\tilde a\geq a>0$, $\alpha\in(0,2\wedge d)$, and 
  $c\in(0,\alpha)$. Moreover, let $V$ be a measurable function on $\R^d$
  satisfying
  \begin{align}
    \label{eq:u}
    \frac{a}{|x|^\alpha} \leq V(x) \leq \frac{\tilde a}{|x|^\alpha}\,.
  \end{align}
  If $F\in H^s_{\mathrm{loc}}(\br)$ with
  $$
  s>2^{[d/(2c)]}\left[\frac{d}{2}\left(1+\frac{1}{c}\right)+1\right]+\frac12
  $$
  and for a $0\neq\varphi\in C_c^\infty(\br_+)$, one has
  $\sup_{t>0}\|\varphi F(t\cdot)\|_{H^s}<\infty$,
  then $F(|p|^\alpha+V)$, initially defined via the $L^2$ functional calculus,
  has weak type $(1,1)$ and is bounded on $L^p(\R^d)$ for all $p\in(1,\infty)$.
\end{theorem}

Immediate consequences of this result are, e.g., the $L^p$-boundedness of
Riesz means $R_\beta(\cl_{a,\alpha})$ where
$R_\beta(\lambda):=(1-\lambda)_+^\beta$ whenever $\beta>s-1/2$, and
imaginary powers $\cl_{a,\alpha}^{i\tau}$, $\tau\in\R$.
In the context of the present work, the main importance of this result is
that it allows us to construct a basic Littlewood--Paley theory by deriving
Bernstein estimates, Lemma \ref{bernstein}, and the crucial square function
estimates, Theorem \ref{squarefunctions}.

\medskip
We would like to emphasize that the above strategy is inspired by
\cite{Killipetal2018,Franketal2019}.
The idea to formulate the norms $\|\cl_{a,\alpha}^{s/2}f\|_p$ in terms of
square functions, which are in turn expressed via the heat kernel, is
borrowed from \cite{Killipetal2018}. The construction of
Littlewood--Paley theory based on heat kernels is, e.g., exhaustively
treated in \cite{Stein1970T}.
On the other hand, we are fortunate to invoke the key estimates on the
Riesz kernel of $\cl_{a,\alpha}$ and on the difference of the heat kernels
of $\cl_{a,\alpha}$ and $|p|^\alpha$ which were obtained in
\cite{Franketal2019}.
In order to make the paper self-contained, we have, however, decided to
review and present the involved arguments for the reader's convenience.

\subsection*{Organization}
In the next section we recall the crucial bounds on the heat kernel of
$\cl_{a,\alpha}$ and state simple but important weighted and ultracontractive
estimates for $\me{-(|p|^\alpha+V)}$ for non-negative functions $V$ on $\R^d$.
These estimates play a major role in the subsequent section where we prove a
H\"ormander multiplier theorem for $|p|^\alpha+V$ with $V$ as in \eqref{eq:u}.
Afterwards, we discuss difficulties arising in the case of negative coupling
constants.
In the fourth section we derive Bernstein estimates and square function
estimates which are crucial to express the $L^p$ norms generated by powers
of $\cl_{a,\alpha}$.
In the fifth section, we prove a reversed Hardy inequality expressed in terms
of square functions and give the proof of Theorem \ref{Thm:A.1}.
In the last section we present a simple generalization of the main result when
the Hardy potential is replaced by a function $V$ which satisfies \eqref{eq:u}.

\section{Heat kernel associated to $\cl_{a,\alpha}$}
We recall recent two-sided bounds on the heat kernel of
$\cl_{a,\alpha}$ by Bogdan et al. \cite{Bogdanetal2019} for $a<0$ and
Cho et al. \cite{Choetal2020} or Jakubowski and Wang \cite{JakubowskiWang2020}
for $a>0$.
For $a=0$ these bounds were already proven by Blumenthal and Getoor
\cite{BlumenthalGetoor1960}. Moreover, for $a=0$ and $\alpha=1$, the
heat kernel is just the Poisson kernel, see also
\cite[Theorem 1.14]{SteinWeiss1971}.

\begin{theorem}[Heat kernels of generalized Hardy operators]
  \label{heatkernel}
  Let $\alpha\in(0,2\wedge d)$, $a\geq a_*$ and $\delta$ be defined by
  \eqref{eq:defdelta}. Then the heat kernel of $\cl_{a,\alpha}$ satisfies
  for all $x,y\in\br^d\setminus\{0\}$ and $t>0$,
  $$
  \me{-t\cl_{a,\alpha}}(x,y)
  \sim \left(1\vee \frac{t^{1/\alpha}}{|x|}\right)^\delta
  \left(1\vee \frac{t^{1/\alpha}}{|y|}\right)^\delta
  t^{-d/\alpha}\left(1\wedge\frac{t^{1+d/\alpha}}{|x-y|^{d+\alpha}}\right).
  $$
\end{theorem}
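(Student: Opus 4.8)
The plan is to treat this statement as a recollection of recent results --- \cite{BlumenthalGetoor1960} for $a=0$, \cite{Choetal2018,JakubowskiWang2018} for $a>0$, and \cite{Bogdanetal2019} for $a_*\le a<0$ --- and to indicate the structure common to all of these proofs rather than to reproduce the lengthy estimates. The first reduction is self-similarity: since $\cl_{a,\alpha}$ is homogeneous of degree $-\alpha$ under the dilations $f(\cdot)\mapsto f(\lambda\cdot)$, one has $\me{-t\cl_{a,\alpha}}(x,y)=t^{-d/\alpha}\me{-\cl_{a,\alpha}}(t^{-1/\alpha}x,t^{-1/\alpha}y)$, so it suffices to prove the bound at $t=1$, i.e. $\me{-\cl_{a,\alpha}}(x,y)\sim(1\vee|x|^{-1})^\delta(1\vee|y|^{-1})^\delta(1\wedge|x-y|^{-d-\alpha})$. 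The exponent $\delta$ enters through the classical identity $(-\Delta)^{\alpha/2}|x|^{-\delta}=-\Psi_{\alpha,d}(\delta)\,|x|^{-\delta-\alpha}=-a\,|x|^{-\delta-\alpha}$, valid for $\delta\in(-\alpha,(d-\alpha)/2]$, which says that $h(x):=|x|^{-\delta}$ is a formal zero-energy eigenfunction of $\cl_{a,\alpha}$, so that $\me{-t\cl_{a,\alpha}}h=h$ and the heat kernel must pick up the singularity (or, for $a>0$ and hence $\delta<0$, the vanishing) of $h$ at the origin.

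For the upper bound when $a\ge0$ one first dominates $\me{-t\cl_{a,\alpha}}(x,y)$ by the free kernel $\me{-t|p|^\alpha}(x,y)\sim t^{-d/\alpha}(1\wedge t^{1+d/\alpha}|x-y|^{-d-\alpha})$ --- immediate from the Trotter product formula and the positivity of the potential --- which already produces the global factor; the origin weights then follow from a refined comparison against the supersolution $h$ by Duhamel's formula and the parabolic comparison principle, bootstrapping the behaviour of $\me{-t\cl_{a,\alpha}}(x,y)$ as $x$ approaches the singularity, exactly as in \cite{Choetal2018,JakubowskiWang2018}. When $a_*\le a<0$ one has $\delta>0$ and $h\notin L^1_{\mathrm{loc}}$ near the origin while $0$ is not an $L^2$ eigenvalue, so intrinsic ultracontractivity is unavailable; instead one performs the Doob $h$-transform $\widetilde p_t(x,y):=h(x)^{-1}\me{-t\cl_{a,\alpha}}(x,y)\,h(y)$, which is the kernel of a conservative Markov semigroup on $L^2(h^2\,\dx)$, and estimates $\widetilde p_t$ by a generalized $3G$-type (chain) inequality, following \cite{Bogdanetal2019}.

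The lower bound is obtained from a near-diagonal lower estimate --- which for these jump processes comes directly from the Lévy (jump) kernel, itself controlled via the same comparison with $h$ near the origin --- combined with the Chapman--Kolmogorov identity $\me{-(t+s)\cl_{a,\alpha}}(x,y)=\int\me{-t\cl_{a,\alpha}}(x,z)\,\me{-s\cl_{a,\alpha}}(z,y)\,\dz$ to propagate the bound to all $x,y$; the factor $1\wedge t^{1+d/\alpha}|x-y|^{-d-\alpha}$ reflects one large jump, and the weights propagate through $\me{-t\cl_{a,\alpha}}h=h$. The main obstacle throughout is matching the exponent $\delta$ sharply, and uniformly in $t$, both from above and below as $|x|$ or $|y|\to0$: unlike in the local case $\alpha=2$, the free $\alpha$-stable kernel decays only algebraically, so the integrals appearing in the Duhamel and Chapman--Kolmogorov steps converge only marginally --- precisely the difficulty that, elsewhere, forces the restriction to $a\ge0$ in the companion multiplier theorem; here, however, we may simply quote the cited two-sided bounds.
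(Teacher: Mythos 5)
The paper does not prove this theorem: it is presented as a recollection of two-sided heat-kernel bounds established in \cite{Bogdanetal2019} (for $a<0$), \cite{Choetal2018,JakubowskiWang2018} (for $a>0$), and \cite{BlumenthalGetoor1960} (for $a=0$), and your proposal correctly treats it the same way, so the two agree in approach. Your sketch of the cited proofs (scaling reduction to $t=1$, the formal zero-mode $h(x)=|x|^{-\delta}$ via $\Psi_{\alpha,d}(\delta)=a$, Trotter domination plus a Duhamel bootstrap for $a\ge0$, the Doob $h$-transform and $3G$-type estimates for $a<0$, Chapman--Kolmogorov for the lower bound) is a faithful summary; the one small slip is the claim that $h\notin L^1_{\mathrm{loc}}$ when $\delta>0$ --- since $\delta\le(d-\alpha)/2<d/2$ one has $h,h^2\in L^1_{\mathrm{loc}}$, and the relevant obstruction is only that $h$ fails to decay, so $h\notin L^2(\R^d)$ and $0$ is not an $L^2$-eigenvalue, which is the part of your sentence that actually carries the argument.
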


The following bounds are going to be vital in the proof of the
spectral multiplier theorem for $|p|^\alpha+V$ with $V$ as in \eqref{eq:u}
and follow immediately from Theorem \ref{heatkernel}.
\begin{lemma}
  \label{heatkernelprops}
  Let $\alpha\in(0,2\wedge d)$ and $V$ be a non-negative, measurable
  function on $\R^d$. Then, for all $t>0$ and $c<\alpha$,
  \begin{subequations}
    \begin{align}
      \label{eq:heatkernelpropa}
      & \sup_{y\in\br^d} \int|\me{-t(|p|^\alpha+V)}(x,y)|(1+t^{-1/\alpha}|x-y|)^{c}\, \dx  <\infty \quad \text{and}\\
      \label{eq:heatkernelpropb}
      & \sup_{y\in\br^d} t^{d/\alpha}\int|\me{-t(|p|^\alpha+V)}(x,y)|^2\, \dx  <\infty\,.
    \end{align}
  \end{subequations}
\end{lemma}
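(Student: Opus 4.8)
The plan is to deduce both estimates directly from the two-sided bound in Theorem \ref{heatkernel}, using that for $a\ge 0$ we have $\delta=\Psi_{\alpha,d}^{-1}(a)\le 0$, so that the prefactors $(1\vee t^{1/\alpha}/|x|)^\delta$ and $(1\vee t^{1/\alpha}/|y|)^\delta$ are bounded above by $1$. Hence for $a\ge0$ one has the clean upper bound
\begin{align*}
  |\me{-t\cl_{a,\alpha}}(x,y)| \lesssim t^{-d/\alpha}\Bigl(1\wedge \frac{t^{1+d/\alpha}}{|x-y|^{d+\alpha}}\Bigr),
\end{align*}
which is, up to constants, the free kernel $\me{-t|p|^\alpha}(x,y)$. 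After the substitution $x\mapsto t^{1/\alpha}x$ (equivalently, rescaling so $t=1$), both statements reduce to size estimates for the single profile $g(z):=1\wedge |z|^{-(d+\alpha)}$ on $\R^d$, independently of $y$ after translating.

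First I would prove \eqref{eq:heatkernelpropa}. Writing $u=(x-y)/t^{1/\alpha}$, the left side equals $\int_{\R^d} t^{-d/\alpha}\bigl(1\wedge |u|^{-(d+\alpha)}\bigr)(1+|u|)^c \, t^{d/\alpha}\dx = \int_{\R^d}\bigl(1\wedge |u|^{-(d+\alpha)}\bigr)(1+|u|)^c\,\td u$, uniformly in $y$ and $t$. This integral is finite: near the origin the integrand is bounded, and for $|u|\ge 1$ it behaves like $|u|^{c-d-\alpha}$, which is integrable at infinity precisely when $c-d-\alpha<-d$, i.e. $c<\alpha$. That is exactly the hypothesis, so \eqref{eq:heatkernelpropa} follows.

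For \eqref{eq:heatkernelpropb} I would square the pointwise bound and integrate: $t^{d/\alpha}\int |\me{-t\cl_{a,\alpha}}(x,y)|^2\dx \lesssim t^{d/\alpha}\int t^{-2d/\alpha}\bigl(1\wedge |u|^{-(d+\alpha)}\bigr)^2\, t^{d/\alpha}\td u = \int_{\R^d}\bigl(1\wedge|u|^{-(d+\alpha)}\bigr)^2\,\td u$, again uniformly in $y$ and $t$. The bracket squared is bounded near $0$ and decays like $|u|^{-2(d+\alpha)}$ at infinity, which is integrable since $2(d+\alpha)>d$; hence the integral is finite. This completes the proof.

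There is no serious obstacle here; the only point requiring a little care is the observation that $a\ge0$ forces $\delta\le0$ (because $\Psi_{\alpha,d}$ is strictly decreasing with $\Psi_{\alpha,d}(0)=0$), which is what makes the awkward $|x|$- and $|y|$-dependent prefactors in Theorem \ref{heatkernel} harmless. Everything else is the elementary observation that $1\wedge|u|^{-(d+\alpha)}$ times a polynomial growth factor of degree strictly less than $\alpha$ is integrable, combined with the scale invariance $\me{-t\cl_{a,\alpha}}(x,y)=t^{-d/\alpha}\me{-\cl_{a,\alpha}}(t^{-1/\alpha}x,t^{-1/\alpha}y)$ that removes the $t$-dependence.
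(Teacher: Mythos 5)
Your proof is correct, and the overall strategy (reduce to the free profile $1\wedge|u|^{-(d+\alpha)}$ after scaling out $t$) is the same as in the paper. Two small differences are worth noting. First, to reduce to the free heat kernel, the paper invokes Trotter's formula, which gives $0\le \me{-t\cl_{a,\alpha}}(x,y)\le \me{-t|p|^\alpha}(x,y)$ for $a\ge0$ directly from domination of semigroups, whereas you deduce the same pointwise upper bound by observing that $\delta\le 0$ kills the prefactors in Theorem \ref{heatkernel}. Both are valid; the Trotter route is self-contained and does not need the two-sided heat kernel estimate at all, while your route is a one-line consequence of the bound the paper has already recorded. Second, for \eqref{eq:heatkernelpropb} the paper uses Plancherel's theorem, $\int|\me{-|p|^\alpha}(x,y)|^2\,\dx = \int \me{-2|p|^\alpha}\,\td p$, which is slightly slicker, whereas you simply square and integrate the pointwise profile, which is more in line with the computation you already did for \eqref{eq:heatkernelpropa}. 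Neither difference affects correctness.
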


\begin{proof}
  By Trotter's formula, it suffices to prove \eqref{eq:heatkernelpropa} and
  \eqref{eq:heatkernelpropb} where the kernel $\me{-t(|p|^\alpha+V)}(x,y)$ is
  replaced by $\me{-t|p|^\alpha}(x,y)$. Moreover, the substitution
  $x\mapsto t^{1/\alpha}x$ shows that it suffices to consider $t=1$.
  Since
  $$
  1\wedge\frac{1}{|x-y|^{d+\alpha}} \sim \frac{1}{(1+|x-y|)^{d+\alpha}}
  $$
  the integral
  $$
  \int_{\br^d}\frac{(1+|x-y|)^{c}}{(1+|x-y|)^{d+\alpha}}\,\dx
  $$
  is finite for all $y\in\R^d$, if $c<\alpha$ which shows
  \eqref{eq:heatkernelpropa}.
  On the other hand, Plancherel's theorem implies
  $$
  \int_{\br^d}|\me{-|p|^\alpha}(x,y)|^2\, \dx
  \sim \int_{\br^d} \me{-2|p|^\alpha}\,\td p = \const
  $$
  which yields the finiteness of the left side of \eqref{eq:heatkernelpropb}.
\end{proof}

\section{A multiplier theorem for $\cl_{a,\alpha}$}

In \cite{Hebisch1995} Hebisch proved a H\"ormander multiplier theorem for
self-adjoint operators if the associated heat kernel satisfies weighted
and ultracontractive estimates and a certain H\"older condition.
The proof is inspired by the one of Zo \cite{Zo1976}, see also
\cite[Section 4-6]{Hebisch1990A}. Although the result holds in
$L^2(M,d\mu)$ where $M$ is some metric space and $\mu$ is some
Borel measure, we will only state it for $M=\R^d$ and $\mu$ being the
Lebesgue measure.

\begin{theorem}[Hebisch {\cite[Theorem 3.1]{Hebisch1995}}]
  \label{hoermander}
  Let $A$ be a non-negative, self-adjoint operator in $L^2(\R^d)$ and
  assume there exist positive numbers $c,b,m$ such that for all $t>0$,
  the bounds
  \begin{align*}
    \sup_{y\in\br^d} \int_{\br^d}|\me{-tA}(x,y)|(1+t^{-1/m}|x-y|)^c\, \dx  <\infty\,,
  \end{align*}
  \begin{align*}
    \sup_{y\in\br^d} t^{d/m} \int_{\br^d}|\me{-tA}(x,y)|^2\, \dx  <\infty\,,
  \end{align*}
  and
  \begin{align}
    \label{eq:heatkernelcancellation}
    \int_{\R^d} |\me{-tA}(x,y)-\me{-tA}(x,z)|\, \dx  \lesssim t^{-b/m}|y-z|^b \quad \text{for all}\ y,z\in\R^d
  \end{align}
  hold. If $F\in H^s_{\mathrm{loc}}(\br)$ with
  $$
  s>2^{[d/(2c)]}\left[\frac{d}{2}\left(1+\frac{1}{c}\right)+1\right]+\frac12
  $$
  and for a $0\neq\varphi\in C_c^\infty(\br_+)$, one has
  $\sup_{t>0}\|\varphi F(t\cdot)\|_{H^s}<\infty$,
  then $F(A)$, initially defined via the $L^2$ functional calculus,
  has weak type $(1,1)$ and is bounded on $L^p$ for all $p\in(1,\infty)$.
\end{theorem}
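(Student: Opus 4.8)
The plan is to show that $F(A)$ is a Calder\'on--Zygmund operator on $L^2(\R^d)$ and to read off both conclusions from the general theory: the H\"ormander condition on the multiplier (together with $s>1/2$) supplies the $L^2$-boundedness, while the three heat-kernel hypotheses are exactly what is needed to verify the kernel regularity.

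\emph{Step 1 ($L^2$-boundedness).} Since $s>1/2$ gives $H^s(\br)\hookrightarrow L^\infty(\br)$, localizing $F$ by dilates of $\varphi$ and taking the supremum in $t$ shows that the H\"ormander condition forces $F\in L^\infty(\R)$. Hence, by the spectral theorem, $F(A)$ is bounded on $L^2(\R^d)$ with norm at most $\|F\|_\infty$.

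\emph{Step 2 (reduction to a kernel estimate, and the $L^p$ range).} By the Calder\'on--Zygmund theorem for singular integrals with non-convolution kernels it suffices to exhibit an off-diagonal kernel $K(x,y)$ for $F(A)$ satisfying the integral H\"ormander condition $\sup_{y,z}\int_{\{|x-y|\ge 2|y-z|\}}|K(x,y)-K(x,z)|\,\dx<\infty$. This gives weak type $(1,1)$, and interpolating with Step~1 yields $L^p$-boundedness for $1<p\le 2$. Since $A$ is self-adjoint, $F(A)^*=\overline F(A)$, and $\overline F$ satisfies the same multiplier condition as $F$ (same $s$, same $\varphi$); applying the argument to $F(A)^*$ shows it is of weak type $(1,1)$ as well, and duality extends the $L^p$-bound to all $1<p<\infty$.

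\emph{Step 3 (dyadic decomposition of the multiplier).} Fix $\phi\in C_c^\infty(\R_+)$ supported in $[1/2,2]$ with $\sum_{j\in\Z}\phi(2^{-j}\lambda)=1$ for $\lambda>0$, and put $F_j(\lambda):=F(\lambda)\phi(2^{-j}\lambda)$, so that $F(A)=\sum_j F_j(A)$ once the harmless term $F(0)\1_{\{0\}}(A)$ is set aside. Comparing $\phi$ with the fixed bump $\varphi$, the multiplier condition yields $\Lambda:=\sup_j\|F_j(2^j\cdot)\|_{H^s}<\infty$.

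\emph{Step 4 (kernel bounds for the dyadic pieces --- the crux).} Write $\rho_j:=2^{-j/m}$ for the natural length scale of $F_j(A)$. The heart of the proof is to show that $F_j(A)$ has kernel $K_j$ satisfying, for a suitable $\theta>d/2$ (with a margin to spare),
\[
\sup_{y}\int_{\R^d}|K_j(x,y)|^2(1+\rho_j^{-1}|x-y|)^{2\theta}\,\dx\ \lesssim\ \rho_j^{-d}\,\Lambda^2 ,
\]
together with the regularity bound
\[
\int_{\R^d}|K_j(x,y)-K_j(x,z)|\,\dx\ \lesssim\ (\rho_j^{-1}|y-z|)^b\,\Lambda\qquad\text{for }|y-z|\le\rho_j .
\]
Both are obtained by an iteration in the spirit of Zo \cite{Zo1976} and \cite[\S4--6]{Hebisch1990A}: one factors $F_j(A)$ through the heat semigroup at the scale $2^{-j}$ --- since $F_j(2^j\cdot)$ is supported in the fixed compact set $[1/2,2]$, one may write $F_j(A)=M_j(A)\me{-2^{-j}A}$ with $M_j(2^j\cdot)$ supported there and of $H^s$-norm $\lesssim\Lambda$, and then iterate this factorization --- and afterwards feeds in the hypotheses: the unweighted $L^2$ heat bound for the Plancherel part, the weighted $L^1$ heat bound to generate spatial decay, and the H\"older bound \eqref{eq:heatkernelcancellation} for the difference. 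The weighted $L^1$ bound only provides decay of order $c$ per step, so of the order of $[d/(2c)]$ steps are needed to push the accumulated decay past $d/2$, and the growth of the required Sobolev index along these steps --- each roughly doubling the smoothness that must be invested --- produces precisely the threshold $s>2^{[d/(2c)]}[\tfrac d2(1+\tfrac1c)+1]+\tfrac12$.

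\emph{Step 5 (assembling the H\"ormander condition).} Fix $y,z$ and set $r:=|y-z|$. For the high-frequency pieces, those with $\rho_j\le r$, decompose $\{|x-y|\ge 2r\}$ into dyadic shells $\{|x-y|\sim 2^k r\}$ and estimate $\int(|K_j(x,y)|+|K_j(x,z)|)\,\dx$ on each by Cauchy--Schwarz against the weighted Plancherel bound; summing in $k$ gives $\lesssim\Lambda(\rho_j/r)^{\theta-d/2}$, which is summable over $\{j:\rho_j\le r\}$ because $\theta>d/2$. For the low-frequency pieces, those with $\rho_j>r$, the regularity bound of Step~4 gives $\lesssim\Lambda(r/\rho_j)^b$, summable over $\{j:\rho_j>r\}$ because $b>0$. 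Adding the two contributions bounds $\int_{\{|x-y|\ge 2r\}}|K(x,y)-K(x,z)|\,\dx$ uniformly in $y,z$, completing Step~2 and hence the proof.

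The serious work is Step~4: deriving the weighted Plancherel and Lipschitz bounds for the dyadic pieces from the three heat-kernel hypotheses alone --- here one has only polynomial, not Gaussian, off-diagonal heat decay and no wave or unitary group at one's disposal --- and, in particular, organizing the iteration carefully enough to land on the stated smoothness exponent. A lesser technical point is to justify the convergence of $\sum_j F_j(A)$ and the existence of an honest off-diagonal kernel for $F(A)$.
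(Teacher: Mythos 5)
This theorem is not proved in the paper: it is quoted verbatim as Hebisch's \cite[Theorem 3.1]{Hebisch1995}, a cited external result, and the paper only remarks that Hebisch's proof ``is inspired by the one of Zo \cite{Zo1976}, see also \cite[Section 4-6]{Hebisch1990A}.'' There is therefore no proof of this statement in the paper against which to compare your attempt.

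That said, your outline is broadly the right skeleton and is consistent with those pointers: Calder\'on--Zygmund reduction, dyadic decomposition of the multiplier, factoring each dyadic piece through the semigroup at the matching scale, and an iteration \`a la Zo/Hebisch to bootstrap off-diagonal decay from the weak weighted $L^1$ heat bound. Steps 1--3 and 5 are routine and would go through as written (modulo a small caveat: you should verify that $F(0)\mathbf{1}_{\{0\}}(A)$ really is harmless under the stated hypotheses, which do not rule out a nontrivial kernel of $A$ on all of $L^2$). However, Step 4 is precisely where the theorem lives, and you leave it as a description rather than a proof. In particular: (i) you assert a weighted $L^2$ Plancherel estimate with weight exponent $\theta>d/2$, but the hypotheses supply only a weighted $L^1$ heat bound with an arbitrarily small weight $c$, so each step of the iteration gains only $c$ units of decay while simultaneously costing Sobolev smoothness, and you do not carry out the bookkeeping that shows the losses compound in exactly the way needed to produce the stated threshold $s>2^{[d/(2c)]}[\tfrac d2(1+\tfrac1c)+1]+\tfrac12$ (the assertion that ``each [step] roughly doubles the smoothness'' is a guess, not a derivation); (ii) the Lipschitz bound on $K_j$ you need is not an immediate consequence of the heat-kernel H\"older hypothesis \eqref{eq:heatkernelcancellation}, because $F_j(A)$ is not simply $e^{-tA}$ but a spectrally localized piece whose kernel regularity in the $y$-variable must itself be extracted from the factorization, again with a smoothness cost. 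Until the iteration in Step 4 is executed with explicit constants and Sobolev indices, one cannot confirm that the stated hypothesis on $s$ suffices, so the proposal as written has a genuine gap at its core.
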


The rest of this section is devoted to the verification of the assumptions
of this theorem for $A=|p|^\alpha+V$ with $V$ as in \eqref{eq:u},
thereby proving Theorem \ref{cancellationexa}.
Since $V\geq0$, the first two conditions follow immediately from Lemma
\ref{heatkernelprops}.
Verifying the third condition is more delicate since the heat kernel
bounds of Theorem \ref{heatkernel} can only be used after resolving the
absolute value. However, after resolving it, cancellations are not expected
anymore due to the different constants in front of the heat kernel bounds.
Nonetheless, one can first verify the condition for $\me{-|p|^\alpha}$.
Afterwards, using Duhamel's formula and the heat kernel bounds of Theorem
\ref{heatkernel}, we verify the condition also for $\me{-(|p|^\alpha+V)}$.

\begin{proposition}
  \label{cancellationex}
  Let $d\in\bn$, $\alpha\in(0,2)$, and $b\in(0,1]$.
  Then \eqref{eq:heatkernelcancellation} holds for $A=|p|^\alpha$ and
  $m=\alpha$.
\end{proposition}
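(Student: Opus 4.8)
The plan is to exploit the translation invariance of $|p|^\alpha=(-\Delta)^{\alpha/2}$. Being a Fourier multiplier, $\me{-t|p|^\alpha}$ acts by convolution with the $\alpha$-stable density $k_t(x):=t^{-d/\alpha}k(t^{-1/\alpha}x)$, where $k:=k_1$ is the smooth function determined by $\widehat{k}(\xi)=\me{-(2\pi|\xi|)^\alpha}$; thus $\me{-t|p|^\alpha}(x,y)=k_t(x-y)$. Substituting $v=x-y$ and then $v=t^{1/\alpha}w$ in the integral on the left-hand side of \eqref{eq:heatkernelcancellation}, and writing $h:=t^{-1/\alpha}(y-z)$, one gets
\begin{align*}
  \int_{\R^d}\abs{\me{-t|p|^\alpha}(x,y)-\me{-t|p|^\alpha}(x,z)}\,\dx=\norm{k(\cdot+h)-k}_{L^1(\R^d)}\,.
\end{align*}
Hence it suffices to establish the modulus-of-continuity bound $\norm{k(\cdot+h)-k}_1\lesssim_{d,\alpha,b}|h|^b$ for all $h\in\R^d$; substituting back then gives exactly the asserted estimate with $m=\alpha$ and an implicit constant independent of $t,y,z$.

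The crux is the endpoint $b=1$, for which I would show $k\in W^{1,1}(\R^d)$; granting $\nabla k\in L^1(\R^d)$, the fundamental theorem of calculus together with Minkowski's integral inequality gives $\norm{k(\cdot+h)-k}_1\le|h|\,\norm{\nabla k}_1$. The integrability $\nabla k\in L^1(\R^d)$ is the only input not of a soft nature, and it rests on two standard facts about the $\alpha$-stable kernel: first, $k\in C^\infty(\R^d)$ with bounded derivatives, since $\widehat k$ decays faster than any polynomial, so $\nabla k$ is locally integrable; second, $k$ and all of its derivatives decay polynomially, and in fact $\abs{\nabla k(x)}$ decays at least like $(1+|x|)^{-(d+\alpha)}$, which is integrable because $\alpha>0$. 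This decay can be quoted from the classical gradient estimates for stable densities, or read off the Fourier representation by noting that $\xi\mapsto\xi_j\me{-(2\pi|\xi|)^\alpha}$ is smooth with rapid decay away from the origin and has a singularity at the origin no worse than $|\xi|^{\alpha+1}$, so its inverse Fourier transform is bounded near $0$ and $O(|x|^{-(d+1+\alpha)})$ at infinity.

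For the remaining range $b\in(0,1)$, I would interpolate the $b=1$ bound against the trivial estimate
\begin{align*}
  \norm{k(\cdot+h)-k}_1\le2\norm{k}_1=2\,,
\end{align*}
which holds because $k\ge0$ and $\int_{\R^d}k=\widehat k(0)=1$. Writing
\begin{align*}
  \norm{k(\cdot+h)-k}_1=\norm{k(\cdot+h)-k}_1^{1-b}\,\norm{k(\cdot+h)-k}_1^{b}\le2^{1-b}\bigl(\norm{\nabla k}_1\,|h|\bigr)^{b}
\end{align*}
yields the desired $|h|^b$-bound with constant depending only on $d,\alpha,b$, completing the proof. I expect the sole genuine obstacle to be verifying $\nabla k\in L^1(\R^d)$; the scaling reduction, the convolution structure, and the interpolation are routine.
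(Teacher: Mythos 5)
Your proposal is correct and follows essentially the same strategy as the paper: reduce, via translation and the scaling $k_t(x)=t^{-d/\alpha}k(t^{-1/\alpha}x)$, to the $L^1$ modulus-of-continuity bound $\norm{k(\cdot+h)-k}_1\lesssim|h|^b$, and then obtain the endpoint $b=1$ from $\nabla k\in L^1(\R^d)$ via the mean value theorem. The packaging for $b<1$ by interpolating against $\norm{k(\cdot+h)-k}_1\le 2$ is cleaner than the paper's explicit case splits at $|x|=3|w|$, $|x|=2$, and $|w|=1/2$, and buys you the full range of $b$ and of $|h|$ at once. The real difference is that you treat the crucial analytic fact $\nabla k\in L^1(\R^d)$ as a black box, quoting ``classical gradient estimates for stable densities'' and offering a Fourier heuristic (the $|\xi|^{\alpha+1}$ singularity of $\widehat{\partial_j k}$ at the origin producing $O(|x|^{-(d+1+\alpha)})$ decay) in lieu of a proof. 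You are right that this is where the entire difficulty of the proposition resides---indeed the paper's proof, apart from the routine reductions, is almost entirely devoted to making exactly that heuristic rigorous, via the Fourier--Bessel representation, repeated integration by parts, and the Bessel recursion $\frac{\td}{\dz}(z^{-\nu}J_\nu(z))=-z^{-\nu}J_{\nu+1}(z)$, separately for even and odd $d$. So: identical architecture, with the one genuinely nontrivial ingredient cited rather than derived; if the cited decay estimate (e.g., from the Blumenthal--Getoor or Bogdan--Jakubowski line of results) is accepted, your argument is complete, while the paper opts for a self-contained proof of that estimate.
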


\begin{proof}
  Translating $x\mapsto x+z$ and scaling $x\mapsto t^{1/\alpha}x$ shows that
  it suffices to prove
  \begin{align}
    \label{eq:cancellationex}
    \int_{\br^d}\left|\me{-|p|^\alpha}(x,0)-\me{-|p|^\alpha}(x,w)\right|\,\dx \lesssim |w|^b
  \end{align}
  where $w=(y-z)/t^{1/\alpha}$.
  Since $\me{-|p|^\alpha}(x)$ is integrable by \eqref{eq:heatkernelpropa},
  it suffices to consider $|w|\leq1/2$.
  We split the integral over $x$ at $|x|=3|w|$ and consider first
  $|x|\leq3|w|$. Since the heat kernel is uniformly bounded in $x$ by
  Theorem \ref{heatkernel}, the triangle inequality yields
  \begin{align*}
    \int\limits_{|x|\leq3|w|}\left|\me{-|p|^\alpha}(x,0)-\me{-|p|^\alpha}(x,w)\right|\,\dx
    \leq 2\int\limits_{|x|\leq4|w|}\left|\me{-|p|^\alpha}(x,0)\right|\,\dx \lesssim |w|^d\,.
  \end{align*}
  For $|x|\geq3|w|$, we use the mean value theorem to estimate the
  left side of \eqref{eq:cancellationex} by a constant times
  $$
  |w| \int\limits_{|x|\geq2|w|}\dx\ \left|\nabla_x\int_{\R^d}\me{ipx}\me{-|p|^\alpha}\,\td p\right|\,.
  $$
  Using the Fourier--Bessel transform (see, e.g., Stein and Weiss
  \cite[Chapter \rom{4}]{SteinWeiss1971}) and the formulas for
  derivatives of Bessel functions \cite[Formula 9.1.30]{Olver1968}, namely
  \begin{align}
    \label{eq:besselrecursion}
    \frac{\td}{\dz}(z^{-\nu} J_\nu(z)) & = -z^{-\nu} J_{\nu+1}(z) \quad \text{for}\ z>0\,,\nu\in\R\,,
  \end{align}
  we obtain for $r=|x|$,
  \begin{align}
    \label{eq:meanvalue}
    \begin{split}
      \left|\nabla \int_{\br^d}\me{ipx}\me{-|p|^\alpha}\,\td p\right|
      & = \left|\partial_r \int_0^\infty k^{d-1}\me{-k^\alpha}(kr)^{-(d-2)/2}J_{(d-2)/2}(kr)\,\dk\right|\\
      & = \left|\int_0^\infty k^{d} \me{-k^\alpha}(kr)^{1-d/2}J_{d/2}(kr)\,\dk\right|\,.
    \end{split}
  \end{align}
  We split the integral over $x$ once more at $|x|=2$ and first show that
  the right side of \eqref{eq:meanvalue} is integrable for $|x|\geq2$.
  To this end, we integrate by parts, using once more
  \eqref{eq:besselrecursion}, and obtain
  \begin{align*}
    & \int_0^\infty k^{d} \me{-k^\alpha}(kr)^{1-d/2}J_{d/2}(kr)\,\dk
    = -r^{-1}\int_0^\infty \me{-k^\alpha}k^d\partial_k\left[(kr)^{1-d/2}J_{d/2-1}(kr)\right]\,\dk\\
    & \quad = r^{-1}\int_0^\infty \me{-k^\alpha}k^{d-1}(d-\alpha k^\alpha)\cdot(kr)^{1-d/2}J_{d/2-1}(kr)\,\dk\,.
  \end{align*}
  The integral over $k$ obviously exists for large $k$ due to the
  $\me{-k^\alpha}$ factor. However, we must be careful with the behavior of
  the integrand for small $k$. Integrating $n-1$ more times by parts shows
  that the right side of the last equation is equal to
    \begin{align}
    \label{eq:meanvalue2}
    \begin{split}
      & r^{1-d/2-n}\int_0^\infty k^{1+d/2-n}J_{d/2-n}(kr)\sum_{j=0}^n a_j\me{-k^\alpha}k^{j\alpha}\,\dk\\
      & \quad = r^{-d}\int_0^\infty (kr)^{1+d/2-n}J_{d/2-n}(kr)\sum_{j=0}^n a_j\me{-k^\alpha}k^{j\alpha}\,\dk
    \end{split}
  \end{align}
  where $a_j=a_j(d,\alpha)\in\R$ and the $k^{j\alpha}$ arise from
  differentiating $\me{-k^\alpha}$. The boundary terms vanish at $k=\infty$
  due to the $\me{-k^\alpha}$ factor. We will momentarily explain why the
  boundary terms also vanish at $k=0$.
  
  We distinguish now between odd and even $d$.
  If $d$ is even, we choose $n=d/2$. Using \eqref{eq:besselrecursion} and
  $J_{-m}(z)=(-1)^mJ_m(z)$ for $m\in\N$ (see \cite[Formula 9.1.5]{Olver1968}),
  the $j$-th summand on the right side of \eqref{eq:meanvalue2} becomes
  \begin{align*}
    & a_j r^{-d}\int_0^\infty (kr)J_0(kr)k^{j\alpha}\me{-k^\alpha}\,\dk
    = -a_j r^{-d-1}\int_0^\infty \partial_k((kr)J_{-1}(kr))k^{j\alpha}\me{-k^\alpha}\,\dk\\
    & \quad = -a_j r^{-d-1}\int_0^\infty k^{-1}\cdot kr J_{1}(kr)\left(\alpha j k^{j\alpha}-\alpha k^{j\alpha+\alpha}\right)\me{-k^\alpha}\,\dk\,,
  \end{align*}
  where the boundary term of the partial integration vanished at $k=0$
  quadratically. Using the bound $|J_1(z)|\lesssim\min\{z,z^{-1/2}\}$
  (see \cite[Formula 9.1.7 and 9.2.1]{Olver1968}), the absolute value of the
  right side of the last formula can be bounded by a constant times
  \begin{align*}
    r^{-d}\left(\int_0^{r^{-1}} kr\cdot (k^{j\alpha}+k^{j\alpha+\alpha})\me{-k^\alpha}\,\dk
    + \int_{r^{-1}}^\infty (kr)^{-1/2} (k^{j\alpha}+k^{j\alpha+\alpha})\me{-k^\alpha}\,\dk\right)\,.
  \end{align*}
  The second summand is bounded by a constant times $r^{-d-1/2}$
  whereas the first summand is bounded by $r^{-d-1-j\alpha}+r^{-d-1-(j+1)\alpha}$.
  Thus, the contribution of even $d$ is integrable for $|x|=r\geq2$ in $\R^d$.
  Note that for $n=d/2-1$, the integrand of \eqref{eq:meanvalue2} is
  \begin{align*}
    (kr)^2 J_1(kr) \sum_{j=0}^n a_j k^{j\alpha}\me{-k^\alpha}
    = -rk^2\partial_k(J_0(kr)) \sum_{j=0}^n a_j k^{j\alpha}\me{-k^\alpha}\,,
  \end{align*}
  i.e., the boundary terms of the partial integration always vanished
  at least quadratically.
    
  If on the other hand $d$ is odd, we choose $n=(d+1)/2$ and use
  \cite[Formula 10.16.1]{NIST:DLMF}, i.e.,
  $J_{-1/2}(kr)= \sqrt{2/\pi}(kr)^{-1/2}\cos(kr)$. Thus, \eqref{eq:meanvalue2}
  becomes
  \begin{align*}
    & \sqrt{\frac{2}{\pi}}r^{-d}\int_0^\infty \me{-k^\alpha}\cos(kr)\sum_{j=0}^n a_j k^{j\alpha}\,\dk\\
    & = \frac{a_0}{\sqrt{2\pi}}r^{-d}\int_\R \me{-|k|^\alpha}\me{ikr}\,\dk
    + \sqrt{\frac{2}{\pi}}r^{-d-1}\int_0^\infty r\cos(kr)\sum_{j=1}^n a_j \me{-k^\alpha}k^{j\alpha}\,\dk\,.
  \end{align*}
  The first integral over $k$ is just the one-dimensional heat kernel
  $\me{-|p|^\alpha}(r)$ which, by Theorem \ref{heatkernel}, decays
  like $r^{-1-\alpha}$. 
  Integrating the second summand once more by parts yields
  \begin{align*}
    -\sqrt{\frac{2}{\pi}}r^{-d-1}\sum_{j=1}^n a_j \int_0^\infty (j\alpha k^{j\alpha-1}-\alpha k^{j\alpha+\alpha-1})\me{-k^\alpha}\sin(kr)\,\dk\,.
  \end{align*}
  This shows that both the integral over $k$, as well as the subsequent integral
  over $\{x\in\R^d:|x|\geq2\}$ exist. Finally, we mention why the boundary
  terms at $k=0$ also vanished in this case.
  If $n=(d-1)/2$, the integrand of \eqref{eq:meanvalue2} is
  $$
  kr\sin(kr) \sum_{j=0}^n a_j k^{j\alpha}\me{-k^\alpha}
  = -(\partial_k\cos(kr))\sum_{j=0}^n a_j k^{j\alpha+1}\me{-k^\alpha}
  $$
  by \cite[Formula 10.16.1]{NIST:DLMF}. This shows that the boundary
  terms vanish at least linearly at $k=0$.\\
  Combining the cases of even and odd $d$ thus shows
  $$
  |w|\int\limits_{|x|\geq2}\left|\nabla\int_{\br^d}\me{ipx}\me{-|p|^\alpha}\,\td p\right|\,\dx
  \lesssim |w|\,.
  $$
  If $2|w|\leq|x|\leq2$, we use \cite[Formula 9.1.60]{Olver1968}, i.e.,
  $|J_{d/2}(kr)|\leq1$, to estimate the right side of \eqref{eq:meanvalue}
  by
  \begin{align*}
    \int_0^\infty k^d\me{-k^\alpha}(kr)^{1-d/2}\,\dk \lesssim r^{1-d/2}\,.
  \end{align*}
  This shows that the integral over $2|w|\leq|x|\leq2$ exists uniformly
  in $|w|$ and thus
  $$
  |w|\int\limits_{2|w|\leq|x|\leq2}\left|\nabla\int_{\br^d}\me{ipx}\me{-|p|^\alpha}\,\td p\right|\,\dx
  \lesssim |w|\,,
  $$
  too.
\end{proof}

We will now use perturbation theory to generalize this result
to $|p|^\alpha+V$ with $V$ as in \eqref{eq:u}.

\begin{proof}[Proof of Theorem \ref{cancellationexa}]
  We merely need to check the H\"older condition
  \eqref{eq:heatkernelcancellation} in Theorem \ref{hoermander} (with
  $m=\alpha$) since the first two conditions were already verified in Lemma
  \ref{heatkernelprops}.
  As in the proof of Proposition \ref{cancellationex}, the fact that for any
  $t>0$,
  \begin{align}
    \label{eq:heatl1}
    \sup_{y\in\R^d}\int_{\R^d}|\me{-t(|p|^\alpha+V)}(x,y)|\,\dx
    \leq \sup_{y\in\R^d}\int_{\R^d}|\me{-t|p|^\alpha}(x,y)|\,\dx \lesssim 1
  \end{align}
  holds, shows that it suffices to prove
  \begin{align}
    \label{eq:cancellationexa}
    \int_{\br^d}\left|\me{-t(|p|^\alpha+V)}(x,w)-\me{-t(|p|^\alpha+V)}(x,y)\right|\,\dx
    \lesssim t^{-b/\alpha}|w-y|^b
  \end{align}
  for some $b>0$ and $t^{-1/\alpha}|w-y|\leq 1/2$.
  By the Duhamel formula
  \begin{align*}
    \me{-t(|p|^\alpha+V)}(x,w) = \me{-t|p|^\alpha}(x,w) - \int_0^t \ds\int_{\R^d}\dz\ \me{-(t-s)(|p|^\alpha+V)}(x,z)V(z)\me{-s|p|^\alpha}(z,w)\,,
  \end{align*}
  the triangle inequality, and the maximum principle, the left side of
  \eqref{eq:cancellationexa} is bounded by
  \begin{align*}
    & \int_{\br^d}\left|\me{-t|p|^\alpha}(x,w)-\me{-t|p|^\alpha}(x,y)\right|\,\dx\\
    & \quad + \tilde a \int_{\br^d}\dx\ \int_0^t \ds\int_{\R^d}\dz\ \me{-(t-s)\cl_{a,\alpha}}(x,z)|z|^{-\alpha}\left|\me{-s|p|^\alpha}(z,w)-\me{-s|p|^\alpha}(z,y)\right|\,.
  \end{align*}
  The relation
  $\me{-t\cl_{a,\alpha}}(x,y)=t^{-d/\alpha}\me{-\cl_{a,\alpha}}(t^{-1/\alpha}x,t^{-1/\alpha}y)$
  (see \cite[Lemma 2.1]{JakubowskiWang2020})
  shows that it suffices to consider $t=1$ again. 
  The assertion for the first summand was already shown in Proposition
  \ref{cancellationex} and any $b\in(0,1]$.
  For $\gamma\in(0,1)$ and $|z|\geq|w-y|^\gamma$, the second summand can be
  estimated using \eqref{eq:heatl1},
  \begin{align*}
    \me{-s|p|^\alpha}(x) \geq \me{-s|p|^\alpha}(y) \quad \text{for all}\ |x|\leq|y|\ \text{and}\ s>0
  \end{align*}
  (see, e.g., \cite[Formula (5.1)]{BlumenthalGetoor1960}), the formula
  for the Riesz kernel of $|p|^\alpha$, i.e.,
  $$
  |p|^{-\alpha}(x,y) = \int_0^\infty \me{-s|p|^\alpha}(x,y)\,\ds
  = \frac{\Gamma((d-\alpha)/2)}{\pi^{d/2}2^\alpha\Gamma(\alpha/2)}|x-y|^{-d+\alpha}
  $$
  (see, e.g., \cite[Chapter \rom{5}, \S 1.1]{Stein1970}), and
  \cite[Theorem 4.5.10]{Hormander1990} (with $1\leq p\leq\infty$ such that
  $0<d-d+\alpha-d/p<1$, i.e., $\alpha-1<d/p<\alpha$,
  and $|z|^{-\alpha}\in L^p(|z|\geq|w-y|^\gamma)$ for $p\in(d/\alpha,\infty]$) by
  \begin{align*}
    & \int_0^1 \ds\ \int_{\br^d}\dx\ \me{-(1-s)\cl_{a,\alpha}}(x,z)\int\limits_{|z|\geq|w-y|^\gamma}\dz\ |z|^{-\alpha} \left|\me{-s|p|^\alpha}(z,w)-\me{-s|p|^\alpha}(z,y)\right|\\
    & \quad \lesssim \int\limits_{|z|\geq|w-y|^\gamma}\dz\ |z|^{-\alpha}\int_0^\infty \ds\ \left(\me{-s|p|^\alpha}(z,w)-\me{-s|p|^\alpha}(z,y)\right)\\
    & \qquad\qquad\qquad\qquad\qquad\qquad \times \left(\one_{\{|z-w|\leq|z-y|\}}-\one_{\{|z-w|\geq|z-y|\}}\right)\\
    & \quad = \frac{\Gamma((d-\alpha)/2)}{\pi^{d/2}2^\alpha\Gamma(\alpha/2)} \int\limits_{|z|\geq|w-y|^\gamma}\dz\ |z|^{-\alpha} \left||z-w|^{-d+\alpha}-|z-y|^{-d+\alpha}\right|\\
    &\quad \lesssim_{d,\alpha}|w-y|^{d-d+\alpha-d/p}\, \||z|^{-\alpha}\|_{L^p(|z|\geq|w-y|^\gamma)}
      = A |w-y|^{\frac{1-\gamma}{p}(\alpha p-d)}\,.
  \end{align*}
  Thus, we are left to examine the case $|z|\leq|w-y|^\gamma$ with the above
  $\gamma<1$. As in the proof of Proposition \ref{cancellationex}, we do not
  expect any further cancellations in this region anymore.
  Therefore, using the triangle inequality, it suffices to estimate the
  contributions from $\me{-s|p|^\alpha}(z,w)$, respectively
  $\me{-s|p|^\alpha}(z,y)$ separately. Without loss of generality, we only
  treat the summand with $\me{-s|p|^\alpha}(z,w)$ and examine closer the
  behavior for $s\lessgtr|w-y|^{\epsilon\alpha}$ and $|z-w|\lessgtr|w-y|^\epsilon$
  with $0<\epsilon<\gamma$.
  On the one hand, for $s\geq|w-y|^{\epsilon\alpha}$ and arbitrary $|z-w|$,
  one estimates (using the maximum principle to perform the integration
  over $x$ and using $\exp(-s|p|^\alpha)(z,w)\lesssim s^{-d/\alpha}$)
  \begin{align*}
    & \int_{|w-y|^{\alpha\epsilon}}^1 \ds \int\limits_{|z|\leq |w-y|^\gamma}\dz\ |z|^{-\alpha}\me{-s|p|^\alpha}(z,w)\\
    & \quad \lesssim |w-y|^{\gamma(d-\alpha)}\int_{|w-y|^{\alpha\epsilon}}^1 \ds\ s^{-d/\alpha}
    \lesssim |w-y|^{(\gamma-\epsilon)(d-\alpha)}\,.
  \end{align*}
  On the other hand, if $|z-w|\geq|w-y|^\epsilon$ and $s\in(0,1)$, one uses
  again the maximum principle to perform the integration over $x$ and
  obtains
  \begin{align*}
    & \int\limits_{\substack{|z|\leq|w-y|^\gamma\\ |z-w|\geq|w-y|^\epsilon}}\dz\ |z|^{-\alpha}|z-w|^{-d-\alpha} \int\limits_0^{|z-w|^\alpha} \ds\ s
    + \int\limits_{\substack{|z|\leq|w-y|^\gamma\\ |z-w|\geq|w-y|^\epsilon}}\dz\ |z|^{-\alpha} \int\limits_{|z-w|^\alpha}^1\ds\ s^{-d/\alpha}\\
    & \quad \lesssim \int\limits_{\substack{|z|\leq|w-y|^\gamma\\ |z-w|\geq|w-y|^\epsilon}}\dz\ |z|^{-\alpha}|z-w|^{-d+\alpha}
      \leq A|w-y|^{(\gamma-\epsilon)(d-\alpha)}
  \end{align*}
  Thus, we are left with the region where $|z-w|\leq |w-y|^\epsilon$ and
  $s\leq|w-y|^{\alpha\epsilon}$ with $\epsilon<\gamma$. At this stage, we
  invoke the heat kernel bounds for $a>0$ of Theorem \ref{heatkernel}.
  Using $1-s\geq1-(1/2)^{\alpha\epsilon}$, $\delta\in(-\alpha,0)$,
  translating $x\mapsto x+z$, and applying H\"older's inequality, we obtain
  \begin{align*}
    & \int \dx\ \int_0^{|w-y|^{\alpha\epsilon}}\ds \int \limits_{\substack{|z|\leq|w-y|^\gamma\\ |z-w|\leq|w-y|^\epsilon}}\dz\ |z|^{-\alpha}\left(1\vee\frac{(1-s)^{1/\alpha}}{|x|}\right)^\delta\left(1\vee\frac{(1-s)^{1/\alpha}}{|z|}\right)^\delta\\
    & \qquad\qquad\qquad\qquad\qquad \times (1-s)^{-d/\alpha}\left(1\wedge\frac{(1-s)^{1+d/\alpha}}{|x-z|^{d+\alpha}}\right)\me{-s|p|^\alpha}(z,w)\\
    & \quad \lesssim \int \frac{\dx}{(1+|x|)^{d+\alpha}} \int\limits_{\substack{|z|\leq|w-y|^\gamma\\ |z-w|\leq|w-y|^\epsilon}}\dz\ |z|^{-\alpha-\delta} \int_0^{|w-y|^{\alpha\epsilon}}\ds\ s^{-d/\alpha}\left(1\wedge\frac{s^{1+d/\alpha}}{|z-w|^{d+\alpha}}\right)\\
    & \quad \lesssim \int\limits_{\substack{|z|\leq|w-y|^\gamma\\ |z-w|\leq|w-y|^\epsilon}}\dz\ |z|^{-\alpha-\delta}|z-w|^{-d+\alpha}\\
    & \quad \leq \left(\int\limits_{|z|\leq|w-y|^\gamma}|z|^{-(\alpha+\delta)p}\,\dz\right)^{1/p}\left(\int\limits_{|z-w|\leq|w-y|^\epsilon}|z-w|^{-(d-\alpha)p'}\,\dz\right)^{1/p'}\\
    & \quad \lesssim |w-y|^{(d-(\alpha+\delta)p)\gamma/p}\cdot |w-y|^{(d-(d-\alpha)p')\epsilon/p'}
      = |w-y|^{\gamma\left(\frac{d}{p}-\alpha-\delta\right)+\epsilon\left(\alpha-\frac{d}{p}\right)}\,.
  \end{align*}
  Both integrals converged since $p<d/(\alpha+\delta)$ and
  $p'<d/(d-\alpha)$, i.e., $p\in(d/\alpha,d/(\alpha+\delta))$ which is
  not an empty interval since $\delta<0$. Moreover, this shows that the
  exponent $\gamma(d/p-\alpha-\delta)+\epsilon(\alpha-d/p)$ is positive
  which concludes the proof of Theorem \ref{cancellationexa}.
\end{proof}

\medskip

Theorem \ref{hoermander} is certainly not applicable if $a<0$ since already
the simple bounds of Lemma \ref{heatkernelprops} do not hold due to the
singularity of $\me{-\cl_{a,\alpha}}(x,y)$ for $|x|,|y|\lesssim1$.
In \cite{Killipetal2018}, Killip et al. proved a Mikhlin multiplier theorem
associated to $-\Delta+a|x|^{-2}$ where they used the fact that the
associated wave equation has the finite speed of propagation property.
This follows from a Paley--Wiener argument and the fact that the heat kernel
satisfies a Davies--Gaffney estimate. In fact, this estimate is also a necessary
condition for the finite speed of propagation property, see, e.g.,
Coulhon and Sikora \cite{CoulhonSikora2008} for further details.
If $\alpha<2$, the distributional support of $\cos(\sqrt{\cl_{a,\alpha}})$ is not
compact anymore which is the main reason why it seems non-trivial to adapt
their proof (cf. \cite[p. 1286f]{Killipetal2018}), even if the coupling constant
is positive.

We conclude with the observation that $\exp(-\cl_{a,\alpha})$ is unbounded on $L^p$
for $a<0$ if $p\notin(d/(d-\delta),d/\delta)$. For this purpose, consider
$\varphi\in C_c^\infty(\br^d)$ with $\supp\phi\subseteq B(0,1)$ such that
$\varphi(x)=1$ for $|x| \leq1/2$. By the lower bound on the heat kernel of
Theorem \ref{heatkernel}, one obtains for $|x|\leq1$,
$$
\left(\me{-\cl_{a,\alpha}}\varphi\right)(x)
\gtrsim |x|^{-\delta}\int\limits_{|y|\leq1/2}\dy\ |y|^{-\delta}\left(1\wedge\frac{1}{|x-y|^{d+\alpha}}\right)
\gtrsim |x|^{-\delta}\,.
$$
Hence, $\me{-\cl_{a,\alpha}}\varphi\notin L^p$ for any $p\geq d/\delta$ and
by self-adjointness and the duality of $L^p$ spaces, it follows that the
$L^p$-boundedness also fails if $p\leq d/(d-\delta)$. This indicates
that $p\in(d/(d-\delta),d/\delta)$ seems to be a ``reasonable'' necessary
condition for a multiplier theorem if $a<0$.


\section{Littlewood--Paley theory}
We define two families of Littlewood--Paley projections associated to
$\cl_{a,\alpha}$ and apply the multiplier theorem to infer their $L^p$-boundedness.
Afterwards, we derive Bernstein estimates and square function estimates.

\begin{definition}[Littlewood--Paley projections]
  Let $\Phi:[0,\infty)\to[0,1]$ be a smooth function such that
  $$
  \Phi(\lambda) = 1 \text{ for } 0\leq\lambda\leq1 \quad\text{and}\quad
  \Phi(\lambda) = 0 \text{ for } \lambda\geq2\,.
  $$
  For each dyadic number $N\in 2^\bz$, let
  $$
  \Phi_N(\lambda) = \Phi(\lambda/N^{\alpha/2}) \quad\text{and}\quad
  \Psi_N(\lambda) = \Phi_N(\lambda)-\Phi_{N/2}(\lambda)
  $$
  such that $\sum_{N\in 2^\Z}\Psi_N(\lambda)=1$ for $\lambda\in\R_+$.
  We define the standard Littlewood--Paley projections as
  $$
  \tilde P_N^{a,\alpha}:= \Psi_N(\sqrt{\cl_{a,\alpha}})
  \quad\text{and}\quad
  \tilde P_N^\alpha:=\tilde P_N^{0,\alpha}
  $$
  and a second set of Littlewood--Paley projections via the heat kernel as
  $$
  P_N^{a,\alpha}:=\me{-\cl_{a,\alpha}/N^\alpha}-\me{-\cl_{a,\alpha}/(N^\alpha/2^\alpha)}
  \quad\text{and}\quad
  P_N^\alpha:=P_N^{0,\alpha}\,.
  $$
\end{definition}

We will now prove Bernstein estimates for these projections which
show in particular that $\me{-\cl_{a,\alpha}}$ is bounded on $L^p$ for
all $a\geq a_*$. In general, these inequalities are useful when the
spectral parameter $\lambda$ is localized, because low Lebesgue
integrability can be upgraded to high Lebesgue integrability at the cost
of some powers of $N$. In fact, this cost is a gain for low $N$ which
improves the inequality.

\begin{lemma}[Bernstein estimates]
  \label{bernstein}
  Let $1<p\leq q<\infty$ when $a\geq0$ and let $d/(d-\delta)<p\leq q<d/\delta$
  when $0>a\geq a_*$. Then the following statements hold.\\
  (1) If $a\geq0$, then
  $\|(\cl_{a,\alpha}/N^{\alpha})^{\frac{s}{2}}\tilde P_N^{a,\alpha}f\|_p\sim \|\tilde P_N^{a,\alpha}f\|_p$,
  i.e.,
  $N^{\frac{\alpha s}{2}}\|\tilde P_N^{a,\alpha}f\|_p\sim \|\cl_{a,\alpha}^{\frac{s}{2}}\tilde P_N^{a,\alpha}f\|_p$
  for all $f\in C_c^\infty(\br^d)$ and all $s\in\br$.\\
  (2) The projections $P_N^{a,\alpha}$ and, if $a\geq0$, $\tilde P_N^{a,\alpha}$
  are bounded from $L^p$ to $L^q$ with norm
  $\mathcal{O}(N^{d\left(\frac1p-\frac1q\right)})$.
\end{lemma}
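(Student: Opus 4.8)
The plan is to read off both parts from the Hörmander multiplier theorem for $\cl_{a,\alpha}$, Theorem~\ref{cancellationexa}, and the two-sided heat kernel bounds of Theorem~\ref{heatkernel}, using throughout the exact homogeneity of $\cl_{a,\alpha}$ under dilations.

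\emph{Part (1).} Set $\Psi_1(\lambda):=\Phi(\lambda)-\Phi(2^\alpha\lambda)$, so that $\tilde P_N^{a,\alpha}=\Psi_1(\cl_{a,\alpha}/N^\alpha)$ and $\supp\Psi_1\subseteq[2^{-\alpha},2]\subset\br_+$. For any $s\in\br$ the functions $g(\lambda):=\lambda^{s/2}\Psi_1(\lambda)$ and, fixing $\chi\in C_c^\infty(\br_+)$ with $\chi\equiv1$ on $\supp\Psi_1$, $\tilde g(\lambda):=\chi(\lambda)\lambda^{-s/2}$, again lie in $C_c^\infty(\br_+)$, and $\Psi_1=\tilde g\cdot g$ identically. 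Hence $(\cl_{a,\alpha}/N^\alpha)^{s/2}\tilde P_N^{a,\alpha}=g(\cl_{a,\alpha}/N^\alpha)$ and $\tilde P_N^{a,\alpha}=\tilde g(\cl_{a,\alpha}/N^\alpha)\,g(\cl_{a,\alpha}/N^\alpha)$. Since both the Hörmander hypothesis and the local Sobolev hypothesis of Theorem~\ref{cancellationexa} are invariant under $F\mapsto F(\cdot/N^\alpha)$, and since $a\geq0$, that theorem bounds $\|g(\cl_{a,\alpha}/N^\alpha)\|_{L^p\to L^p}$ and $\|\tilde g(\cl_{a,\alpha}/N^\alpha)\|_{L^p\to L^p}$ uniformly in $N$; combining the two displayed identities gives $\|(\cl_{a,\alpha}/N^\alpha)^{s/2}\tilde P_N^{a,\alpha}f\|_p\sim\|\tilde P_N^{a,\alpha}f\|_p$ uniformly in $N$.

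\emph{Part (2).} The dilations $D_\lambda f(x):=f(x/\lambda)$ satisfy $D_\lambda^{-1}\cl_{a,\alpha}D_\lambda=\lambda^{-\alpha}\cl_{a,\alpha}$, so $\me{-t\cl_{a,\alpha}}=D_{t^{1/\alpha}}\me{-\cl_{a,\alpha}}D_{t^{1/\alpha}}^{-1}$ and therefore $\|\me{-t\cl_{a,\alpha}}\|_{L^p\to L^q}=t^{-\frac d\alpha(\frac1p-\frac1q)}\|\me{-\cl_{a,\alpha}}\|_{L^p\to L^q}$; by the triangle inequality the same power of $t$ controls $\|P_N^{a,\alpha}\|_{L^p\to L^q}$, and with $t=N^{-\alpha}$ this is the factor $N^{d(1/p-1/q)}$. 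So it suffices to bound $\me{-\cl_{a,\alpha}}\colon L^p\to L^q$. By Theorem~\ref{heatkernel} its kernel is dominated by $(1\vee|x|^{-1})^\delta(1\vee|y|^{-1})^\delta(1+|x-y|)^{-d-\alpha}$. If $a\geq0$, then $\delta\leq0$, the weight factors are $\leq1$, and the operator is dominated by convolution with $(1+|\cdot|)^{-d-\alpha}$, which lies in $L^r(\br^d)$ for every $r\in[1,\infty]$; Young's inequality gives $L^p\to L^q$ boundedness for $1<p\leq q<\infty$. If $0>a\geq a_*$, then $\delta\in(0,(d-\alpha)/2]$, and I would write $(1\vee|x|^{-1})^\delta=1+v(x)$ with $v\geq0$ supported in the unit ball and $v\in L^s(\br^d)$ for all $s<d/\delta$, expand the dominating operator as $Q+M_vQ+QM_v+M_vQM_v$ ($Q$ = convolution by $(1+|\cdot|)^{-d-\alpha}$, $M_v$ = multiplication by $v$), handle $Q$ by Young as above, and handle the three remaining terms by Young for the convolution factor followed by Hölder for each factor of $v$, with the $L^s$-exponent of $v$ taken close enough to $d/\delta$. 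Finally, for $\tilde P_N^{a,\alpha}$ with $a\geq0$, put $h(\lambda):=\Psi_1(\lambda)/(\me{-\lambda}-\me{-2^\alpha\lambda})\in C_c^\infty(\br_+)$ (the denominator is bounded below on $\supp\Psi_1$), so $\tilde P_N^{a,\alpha}=h(\cl_{a,\alpha}/N^\alpha)\,P_N^{a,\alpha}$; Theorem~\ref{cancellationexa} and scaling invariance bound $h(\cl_{a,\alpha}/N^\alpha)$ on $L^q$ uniformly in $N$, so the $P_N^{a,\alpha}$ estimate transfers. In particular the case $p=q$, $N=1$ shows $\me{-\cl_{a,\alpha}}$ is bounded on $L^p$ for every $a\geq a_*$.

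The multiplier-theorem factorizations and the scaling argument are routine. The one genuinely computational point — though the paper rightly calls it simple — is the $a<0$ case: one must check that the Young--Hölder exponent bookkeeping for $M_vQ$, $QM_v$ and $M_vQM_v$ closes precisely when $d/(d-\delta)<p\leq q<d/\delta$, the lower bound on $p$ forcing $v\in L^{p'}$ and the upper bound on $q$ forcing $v\in L^q$ while keeping the intermediate Young exponents admissible. That this range is also necessary is exactly the unboundedness of $\me{-\cl_{a,\alpha}}$ established at the end of Section~3.
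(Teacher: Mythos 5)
Your proof is correct and follows essentially the same route as the paper: part (1) via Theorem~\ref{cancellationexa} applied to compactly supported smooth multipliers, part (2) by reducing $\tilde P_N^{a,\alpha}$ to the heat semigroup through the multiplier theorem, scaling to $t=1$, and estimating $\me{-\cl_{a,\alpha}}$ on $L^p\to L^q$ using Theorem~\ref{heatkernel} together with Young and H\"older. The only difference is cosmetic: for $a<0$ the paper splits the integration into the four regions $|x|\lessgtr N^{-1}$, $|y|\lessgtr N^{-1}$ and treats each by H\"older/Minkowski/Young, whereas you expand the dominating operator as $(I+M_v)Q(I+M_v)=Q+M_vQ+QM_v+M_vQM_v$ and do the identical exponent bookkeeping term by term; your factorization $\tilde P_N^{a,\alpha}=h(\cl_{a,\alpha}/N^\alpha)P_N^{a,\alpha}$ is also a mild repackaging of the paper's three-factor splitting $\me{-\cl_{a,\alpha}/N^\alpha}\cdot[\me{\cl_{a,\alpha}/N^\alpha}\tilde P_N^{a,\alpha}\me{\cl_{a,\alpha}/N^\alpha}]\cdot\me{-\cl_{a,\alpha}/N^\alpha}$.
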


\begin{proof}
  The first assertion follows immediately from Theorem \ref{cancellationexa}.\\
  We focus now on the second assertion and begin with the observation that
  $\tilde P_N^{a,\alpha}$ can be written as a product of $L^p$-bounded multipliers
  due to Theorem \ref{cancellationexa} and the $L^p\to L^q$-boundedness of
  $\me{-\cl_{a,\alpha}/N^\alpha}$. More precisely, we have for some $r\in(p,q)$
  \begin{align*}
    & \|\tilde P_N^{a,\alpha}f\|_{L^q}\\
    & \quad \leq \|\me{-\cl_{a,\alpha}/N^\alpha}\|_{L^r\to L^q} \|\me{\cl_{a,\alpha}/N^\alpha} \tilde P_N^{a,\alpha}\me{\cl_{a,\alpha}/N^\alpha}\|_{L^r\to L^r} \|\me{-\cl_{a,\alpha}/N^\alpha }\|_{L^p\to L^r}\|f\|_{L^p}\\
    & \quad \lesssim N^{d(1/p-1/r+1/r-1/q)}\|f\|_{L^p}\,.
  \end{align*}
  Thus, it suffices to prove the second assertion for
  $\me{-\cl_{a,\alpha}/N^\alpha}$.
  If $a\geq0$, applying the maximum principle shows that it suffices to
  compute the $L^p\to L^q$-norm of the heat kernel associated to $|p|^\alpha$.
  Scaling $x\mapsto N^{-1}x$ and applying Young's inequality with
  $r=(1+1/q-1/p)^{-1}\geq1$ yields
  \begin{align*}
    \|\me{-\cl_{a,\alpha}/N^\alpha}f\|_q
    \lesssim N^{d} \left\|1\wedge\frac{N^{-\alpha-d}}{|x|^{d+\alpha}}\right\|_r \|f\|_p
    \lesssim N^{d\left(\frac1p-\frac1q\right)} \|f\|_p\,.
  \end{align*}
  For $0>a\geq a_*$, we employ the heat kernel bounds of Theorem
  \ref{heatkernel} to estimate
  \begin{align}
    \label{eq:A.27}
    \begin{split}
      & \|\me{-\cl_{a,\alpha}/N^\alpha}f\|_q\\
      & \quad \lesssim N^{d} \left\|\left(1\vee\frac{N^{-1}}{|x|}\right)^\delta\int_{\br^d}\left(1\vee\frac{N^{-1}}{|y|}\right)^\delta\left(1\wedge\frac{N^{-\alpha-d}}{|x-y|^{d+\alpha}}\right) |f(y)| \,\dy \right\|_q\,.
    \end{split}
  \end{align}
  To handle the right side, we distinguish between the following
  four cases.
  \subsection*{Case 1: $|x| \leq N^{-1}$, $|y| \leq N^{-1}$.}
  Using H\"older's inequality and recalling $d/(d-\delta)<p\leq q<d/\delta$,
  one can estimate the right side of \eqref{eq:A.27} by
  \begin{subequations}
    \label{eq:A.19}
    \begin{align}
      \begin{split}
        & N^{d-2\delta}\left\| |x|^{-\delta}\int\limits_{|y|\leq N^{-1}}|y|^{-\delta}|f(y)|\,\dy\right\|_{L^q(|x| \leq N^{-1})}\\
        & \quad \lesssim N^{d-2\delta}\left\| |x|^{-\delta}\right\|_{L^q(|x| \leq N^{-1})}\left\| |y|^{-\delta}\right\|_{L^{p'}(|y| \leq N^{-1})} \|f\|_p
        \lesssim N^{d\left(\frac1p-\frac1q\right)} \|f\|_p.
      \end{split}
    \end{align}
    
    \subsection*{Case 2: $|x| \leq N^{-1}$, $|y|>N^{-1}$.}
    Using H\"older's inequality, the right side of \eqref{eq:A.27} can be
    estimated by
    \begin{align}
      \begin{split}
        &N^{d-\delta}\left\| |x|^{-\delta}\int_{\br^d}\left(1\wedge\frac{N^{-\alpha-d}}{|x-y|^{d+\alpha}}\right)|f(y)|\,\dy\right\|_{L^q(|x| \leq N^{-1})}\\
        \lesssim& N^{d-\delta} \| |x|^{-\delta}\|_{L^q(|x| \leq N^{-1})} \left\|\left(1\wedge\frac{N^{-\alpha-d}}{|y|^{d+\alpha}}\right)\right\|_{p'} \|f\|_p
        \lesssim N^{d\left(\frac1p-\frac1q\right)} \|f\|_p.
      \end{split}
    \end{align}
    
    \subsection*{Case 3: $|x|>N^{-1}$, $|y| \leq N^{-1}$.}
    Using Minkowski's inequality and then H\"older's inequality, the right
    side of \eqref{eq:A.27} can be bounded by
    \begin{align}
      \begin{split}
        &N^{d-\delta} \left\|\int\limits_{|y| \leq N^{-1}} |y|^{-\delta}\left(1\wedge\frac{N^{-\alpha-d}}{|x-y|^{d+\alpha}}\right)|f(y)|\,\dy\right\|_q\\
        \lesssim&N^{d-\delta} \left\|\left(1\wedge\frac{N^{-\alpha-d}}{|x|^{d+\alpha}}\right)\right\|_q \| |y|^{-\delta}\|_{L^{p'}(|y| \leq N^{-1})} \|f\|_p
        \lesssim N^{d\left(\frac1p-\frac1q\right)} \|f\|_p.
      \end{split}
    \end{align}
    
    \subsection*{Case 4: $|x|>N^{-1}$, $|y|>N^{-1}$.}
    As in the case of non-negative couplings, we employ Young's inequality to
    estimate the last contribution of the right side of \eqref{eq:A.27} by
    \begin{align}
      \begin{split}
        &N^{d} \left\|\int_{\br^d}\left(1\wedge\frac{N^{-\alpha-d}}{|x-y|^{d+\alpha}}\right)|f(y)|\,\dy\right\|_q\\
        \lesssim&N^{d} \left\|\left(1\wedge\frac{N^{-\alpha-d}}{|x|^{d+\alpha}}\right)\right\|_r \|f\|_p
        \lesssim N^{d\left(\frac1p-\frac1q\right)} \|f\|_p
      \end{split}
    \end{align}
  \end{subequations}
  where $1+1/q=1/r+1/p$.
\end{proof}

The spectral multiplier theorem, a randomization argument involving
Khintchine's inequality, and a duality argument yield the following
two-sided square function estimates.
Since the same arguments already appear in the proof of
\cite[Theorem 4.3]{Killipetal2016}, we skip the proof. 

\begin{theorem}[Square function estimates]
  \label{squarefunctions}
  Let $\alpha\in(0,2\wedge d)$, $a\geq0$, $1<p<\infty$, and $s>0$.
  If $k\in\N$ satisfies $k>s/2$, then
  $$
  \left\|\left(\sum_{N\in2^\bz} |N^{\frac{\alpha s}{2}}\tilde P_N^{a,\alpha}f|^2 \right)^{1/2}\right\|_{p}
  \sim \left\|\cl_{a,\alpha}^{s/2}f\right\|_{p}
  \sim \left\|\left(\sum_{N\in2^\bz} |N^{\frac{\alpha s}{2}}(P_N^{a,\alpha})^kf|^2\right)^{1/2}\right\|_{p}
  $$
  for all $f\in C_c^\infty(\br^d)$.
\end{theorem}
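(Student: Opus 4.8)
The plan is to follow the strategy outlined in the text: combine the H\"ormander multiplier theorem (Theorem~\ref{cancellationexa}) with a Khintchine-inequality randomization argument and a duality pairing, exactly as in the proof of \cite[Theorem~4.3]{Killipetal2016}. I will give the scheme for the first equivalence $\|(\sum_N |N^{\alpha s/2}\tilde P_N^{a,\alpha}f|^2)^{1/2}\|_p\sim\|\cl_{a,\alpha}^{s/2}f\|_p$; the argument for the heat-kernel projections $(P_N^{a,\alpha})^k$ is identical modulo replacing the multiplier $\Psi_N(\lambda)$ by $(\me{-\lambda/N^\alpha}-\me{-\lambda 2^\alpha/N^\alpha})^k$ and checking the same uniform H\"ormander bound.

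\emph{Upper bound.} Fix a sequence $(\epsilon_N)_{N\in2^\Z}$ of independent Rademacher variables on a probability space $(\Omega,\mathbb{P})$. For each realization $\omega$, the function
$$
m_\omega(\lambda) := \sum_{N\in2^\Z}\epsilon_N(\omega)\, \frac{N^{\alpha s/2}\Psi_N(\lambda)}{\lambda^{s/2}}
$$
is, for $\lambda>0$, a \emph{finite} sum on any compact subset of $\R_+$ since the $\Psi_N$ have finitely-overlapping supports, and a routine computation using $\Psi_N(\lambda)=\Psi_1(\lambda/N^\alpha)$ and $\lambda\sim N^\alpha$ on $\supp\Psi_N$ shows that $m_\omega\in H^{s_0}_{\mathrm{loc}}(\R)$ together with $\sup_{t>0}\|\varphi\, m_\omega(t\cdot)\|_{H^{s_0}}\leq C$ with $C$ \emph{independent of $\omega$}, for any $s_0$ (in particular for the $s_0$ required in Theorem~\ref{cancellationexa}); indeed at fixed scale $t$ only $O(1)$ terms contribute and their $H^{s_0}$ norms are uniformly bounded. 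Hence Theorem~\ref{cancellationexa} gives $\|m_\omega(\cl_{a,\alpha})g\|_p\leq C\|g\|_p$ uniformly in $\omega$. Applying this with $g=\cl_{a,\alpha}^{s/2}f$, raising to the $p$-th power, integrating in $\omega$, and invoking Khintchine's inequality to convert $\mathbb{E}_\omega|\sum_N\epsilon_N a_N|^p$ into $(\sum_N|a_N|^2)^{p/2}$, yields
$$
\left\|\Big(\sum_{N}|N^{\alpha s/2}\tilde P_N^{a,\alpha}f|^2\Big)^{1/2}\right\|_p \lesssim \|\cl_{a,\alpha}^{s/2}f\|_p\,.
$$

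\emph{Lower bound.} Here one uses that $k>s/2$ (or, in the $\tilde P_N$ case, that $\tilde P_N=\tilde P_N\widetilde Q_N$ for a fattened projection $\widetilde Q_N$) so that $\cl_{a,\alpha}^{s/2}f=\sum_N \cl_{a,\alpha}^{s/2}\widetilde Q_N\tilde P_N^{a,\alpha}f$, and one tests against $h\in L^{p'}$ with $\|h\|_{p'}\le 1$:
$$
\left|\langle \cl_{a,\alpha}^{s/2}f, h\rangle\right|
= \left|\sum_N \langle N^{\alpha s/2}\tilde P_N^{a,\alpha}f,\ (\cl_{a,\alpha}/N^\alpha)^{s/2}\widetilde Q_N^{\,*} h\rangle\right|
\leq \left\|\Big(\sum_N|N^{\alpha s/2}\tilde P_N^{a,\alpha}f|^2\Big)^{1/2}\right\|_p
\left\|\Big(\sum_N|(\cl_{a,\alpha}/N^\alpha)^{s/2}\widetilde Q_N^{\,*}h|^2\Big)^{1/2}\right\|_{p'}
$$
by Cauchy--Schwarz in $N$ and H\"older in $x$. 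The second factor is $\lesssim\|h\|_{p'}$ by the already-proven upper-bound estimate applied in $L^{p'}$ (with $s$ possibly replaced by a slightly different smooth, uniformly-bounded-in-scale multiplier encoding $(\cl_{a,\alpha}/N^\alpha)^{s/2}\widetilde Q_N$, which again meets the hypotheses of Theorem~\ref{cancellationexa}). Taking the supremum over $h$ gives $\|\cl_{a,\alpha}^{s/2}f\|_p\lesssim\|(\sum_N|N^{\alpha s/2}\tilde P_N^{a,\alpha}f|^2)^{1/2}\|_p$, completing the equivalence. For the $(P_N^{a,\alpha})^k$ side, the role of $\widetilde Q_N$ is played by an appropriate heat-semigroup resolution of the identity, and the condition $k>s/2$ guarantees $(P_N^{a,\alpha})^k$ decays fast enough in $N$ on both ends to make $\sum_N (\cl_{a,\alpha}/N^\alpha)^{s/2}(P_N^{a,\alpha})^k$ a bounded operator with the requisite uniform multiplier bounds.

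The main obstacle is the \emph{uniform} (in the Rademacher sign pattern, and in the dyadic scale $t$) verification of the H\"ormander norm $\sup_{t>0}\|\varphi\, m_\omega(t\cdot)\|_{H^{s_0}}\leq C$ for the randomized multiplier, since this is what licenses a single application of Theorem~\ref{cancellationexa} with a constant that survives the $\mathbb{E}_\omega$ average; once that uniformity is in hand, Khintchine and duality are entirely standard. Because these steps are carried out verbatim in \cite[Theorem~4.3]{Killipetal2016} (and in \cite{Killipetal2018,Franketal2018} in closely analogous settings), with $\cl_{a,\alpha}$ entering only through the conclusion of Theorem~\ref{cancellationexa}, I would simply cite that proof rather than reproduce it.
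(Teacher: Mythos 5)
Your proposal is correct and matches the approach the paper itself intends: the paper explicitly states that the result follows from the spectral multiplier theorem (Theorem~\ref{cancellationexa}), a Khintchine-inequality randomization, and a duality argument, and then delegates the details to \cite[Theorem 4.3]{Killipetal2016}, which you reproduce faithfully. Your extra observation that $k>s/2$ is precisely what makes $(N^{\alpha s/2}\lambda^{-s/2})(\me{-\lambda/N^\alpha}-\me{-2^\alpha\lambda/N^\alpha})^k$ a bounded H\"ormander multiplier near $\lambda=0$ matches the remark the paper makes right after the theorem statement.
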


Although we apply these estimates only for $k=1$ (since $s\in(0,2)$),
we remark that $k>s/2$ guarantees that
$(N^{\alpha s/2}\lambda^{-s/2})\cdot(\me{-\lambda/N^\alpha}-\me{-\lambda/(N/2)^\alpha})^k$
is actually a H\"ormander multiplier. 

We conclude this section by noting that the spectral theorem in $L^2$
yields an expansion of the identity of $L^2$ functions in terms of
eigenfunctions of $\cl_{a,\alpha}$ as in \cite{Killipetal2016,Killipetal2018}.
More precisely, one has the $L^2$-convergence (noting that zero is
not an eigenvalue of $\cl_{a,\alpha}$)
$$
\sum_{N\in2^{\Z}}(\tilde P_N^{a,\alpha}f)(x)
=f(x)
=\sum_{N\in2^{\Z}}(P_N^{a,\alpha}f)(x)
$$
for all $a\geq a_*$ and $f\in L^2(\R^d)$.
By the spectral multiplier theorem, respectively the $L^p$-boundedness
of the heat kernel, the first equality continues to hold in $L^p$ for
$a\geq0$ and $1<p<\infty$ and the second one for any $1<p<\infty$ if
$a\geq0$ and for any $d/(d-\delta)<p<d/\delta$ if $a<0$.
This is because partial sums are $L^p$-bounded for the above $p$
which allows one to conclude via a density and interpolation (via
H\"older's inequality) argument.


\section{A reverse Hardy inequality and proof of Theorem \ref{Thm:A.1}}

The key tool to prove the reversed Hardy inequality is a pointwise bound
on the difference of the heat kernels of $\cl_{a,\alpha}$ and
$|p|^\alpha$, i.e.,
$$
K_t^\alpha(x,y):= \me{-t |p|^\alpha}(x,y) - \me{-t\cl_{a,\alpha}}(x,y) \,.  
$$
In \cite[Lemma 3.1]{Franketal2019} it was shown that there is an
effective cancellation in the region $(|x|\vee |y|)^\alpha\geq t$
and $|x|\sim |y|$.
There, the bound was formulated in terms of the functions
$$
L_t^{\alpha,\delta}(x,y) := \one_{\{(|x|\vee|y|)^\alpha\leq t\}} t^{-\frac{d}{\alpha}} \left( \frac{t^{2/\alpha}}{|x||y|} \right)^{\delta}
+ \one_{\{ (|x|\vee|y|)^\alpha\geq t\}} \frac{t}{(|x|\vee|y|)^{d+\alpha}} \left( 1 \vee \frac{t^{1/\alpha}}{|x|\wedge|y|} \right)^{\delta}
$$
and
$$
M_t^\alpha(x,y) := \one_{\{(|x|\vee|y|)^\alpha\geq t\}} \one_{\{\frac12 |x|\leq |y|\leq 2|x|\}}
\frac{t^{1-\frac{d}{\alpha}}}{(|x|\wedge|y|)^\alpha} \left( 1 \wedge \frac{t^{1+\frac{d}{\alpha}}}{|x-y|^{d+\alpha}} \right).
$$
Recall that $\delta_+=0$ if $a\geq0$ and $\delta_+=\delta$ if $a<0$.

\begin{lemma}[Difference of kernels]
  \label{differencekernel}
  Let $\alpha\in(0,2\wedge d)$, $a\in[a_*,\infty)$ and $\delta$ be defined
  by \eqref{eq:defdelta}. Then for all $x,y\in\br^d\setminus\{0\}$ and $t>0$,
  \begin{align}
    \label{eq:4.7}
    |K_t^\alpha(x,y)| \lesssim L_t^{\alpha,\delta_+}(x,y) + M_t^\alpha(x,y) \,.
  \end{align}
\end{lemma}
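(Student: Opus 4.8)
The argument follows \cite[Lemma 3.1]{Franketal2018}; I sketch the strategy. First I would reduce to $t=1$: since $|p|^\alpha$ and $\cl_{a,\alpha}$ are homogeneous of degree $\alpha$ under dilations, their heat kernels satisfy $\me{-tA}(x,y)=t^{-d/\alpha}\me{-A}(t^{-1/\alpha}x,t^{-1/\alpha}y)$, and one checks directly that both sides of \eqref{eq:4.7} transform in the same way; hence it suffices to bound $|K_1^\alpha(x,y)|$ by $L_1^{\alpha,\delta_+}(x,y)+M_1^\alpha(x,y)$ for all $x,y\in\br^d$. The main tool is Duhamel's formula
$$
K_1^\alpha(x,y)=a\int_0^1\ds\int_{\br^d}\dz\ \me{-(1-s)|p|^\alpha}(x,z)\,|z|^{-\alpha}\,\me{-s\cl_{a,\alpha}}(z,y)\,,
$$
used together with the two-sided bounds of Theorem \ref{heatkernel} and its $a=0$ specialization. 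I would then split $\br^d\times\br^d$ according to whether the extra decay encoded in $M_1^\alpha$ is actually needed.

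In the region $|x|\vee|y|\leq1$, as well as in the region $|x|\vee|y|\geq1$ with $|y|\notin[\tfrac12|x|,2|x|]$, no cancellation is needed: there $|K_1^\alpha(x,y)|\leq\me{-|p|^\alpha}(x,y)+\me{-\cl_{a,\alpha}}(x,y)$, and a short computation with Theorem \ref{heatkernel} shows that each of the two kernels is already $\lesssim L_1^{\alpha,\delta_+}(x,y)$. Indeed, if $|x|\vee|y|\leq1$ then $\me{-|p|^\alpha}(x,y)\lesssim1\leq(|x||y|)^{-\delta_+}$ and $\me{-\cl_{a,\alpha}}(x,y)\lesssim(|x||y|)^{-\delta}\leq(|x||y|)^{-\delta_+}$, which is the first piece of $L_1$; if $|x|\vee|y|\geq1$ and $|y|\notin[\tfrac12|x|,2|x|]$ then $|x-y|\sim|x|\vee|y|\gtrsim1$, so both kernels are $\lesssim(|x|\vee|y|)^{-d-\alpha}(1\vee\tfrac{1}{|x|\wedge|y|})^{\delta_+}$, the second piece. (One could equally run Duhamel here; the triangle inequality is quicker.)

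The substantial case is $|x|\vee|y|\geq1$ with $\tfrac12|x|\leq|y|\leq2|x|$, so $|x|\sim|y|\gtrsim1$; here the triangle inequality only gives $|K_1^\alpha(x,y)|\lesssim1\wedge|x-y|^{-d-\alpha}$, whereas we want $M_1^\alpha(x,y)\sim|x|^{-\alpha}(1\wedge|x-y|^{-d-\alpha})$ (up to $L_1$), smaller by a factor $\sim|x|^{-\alpha}$. To extract this gain I would use the Duhamel formula and split the $z$-integral at $|z|=c|x|$ for a small fixed $c>0$. On $\{|z|\geq c|x|\}$ one has $|z|^{-\alpha}\lesssim|x|^{-\alpha}$, and since $|z|\gtrsim|x|\gtrsim1$ and $|y|\gtrsim1$ the weights $(1\vee s^{1/\alpha}/|z|)^\delta$ and $(1\vee s^{1/\alpha}/|y|)^\delta$ in the bound of Theorem \ref{heatkernel} are $\lesssim1$, so $\me{-s\cl_{a,\alpha}}(z,y)\lesssim\me{-s|p|^\alpha}(z,y)$; the semigroup identity then gives $\int_{\br^d}\me{-(1-s)|p|^\alpha}(x,z)\me{-s|p|^\alpha}(z,y)\dz=\me{-|p|^\alpha}(x,y)\lesssim1\wedge|x-y|^{-d-\alpha}$ uniformly in $s\in(0,1)$, the $\ds$-integral costs only $\mathcal{O}(1)$, and so this contribution is $\lesssim|x|^{-\alpha}(1\wedge|x-y|^{-d-\alpha})\sim M_1^\alpha(x,y)$. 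On $\{|z|\leq c|x|\}$ one has $|x-z|\sim|z-y|\sim|x|$, so both heat kernels carry a factor $|x|^{-d-\alpha}$ (with harmless $s$-powers); the only delicate point is the integrability near $z=0$ of $|z|^{-\alpha}$ — and, when $a<0$, of the extra factor $(1\vee s^{1/\alpha}/|z|)^\delta=(s^{1/\alpha}/|z|)^{\delta_+}$ — which holds because $\int_{|z|\leq c|x|}|z|^{-\alpha-\delta_+}\dz<\infty$, valid since $\alpha+\delta_+<d$ (using $\alpha<d$ and $\delta\leq(d-\alpha)/2$); tracking the powers of $|x|$ and $s$ then shows this piece is $\lesssim|x|^{-d-\alpha}\lesssim L_1^{\alpha,\delta_+}(x,y)$.

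The main obstacle lies entirely in this last region $|x|\sim|y|\gtrsim1$: one must simultaneously capture the sharp $|x|^{-\alpha}$ gain — which originates from the $z$'s comparable to $x$ and $y$, where the two heat kernels concentrate their mass and where $|z|^{-\alpha}\sim|x|^{-\alpha}$ — and, for $a<0$, control the competition between the singularity of the weight $|z|^{-\alpha}$ and the blow-up of $\me{-s\cl_{a,\alpha}}(z,y)$ as $|z|\to0$, which is precisely where the assumption $\alpha<d$ (together with the quantitative value of $\delta$) enters. The remaining ingredients — the convolution-stability of the free fractional heat-kernel profile, in the spirit of the computation in Lemma \ref{heatkernelprops}, and the elementary bookkeeping in the easy regions — are routine.
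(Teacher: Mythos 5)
The paper itself does not prove Lemma~\ref{differencekernel}; it states the bound and cites \cite[Lemma~3.1]{Franketal2018}, so there is no in-text proof to compare against. That said, your sketch correctly reconstructs the argument along the lines of that reference: the scaling reduction to $t=1$ is legitimate (both $K_t^\alpha$ and $L_t^{\alpha,\delta_+}+M_t^\alpha$ scale as $t^{-d/\alpha}(\cdot)(t^{-1/\alpha}x,t^{-1/\alpha}y)$), and the Duhamel representation with integrand $\me{-(1-s)|p|^\alpha}(x,z)|z|^{-\alpha}\me{-s\cl_{a,\alpha}}(z,y)$ has the right sign structure, so $|K_1^\alpha|=|a|\int\!\!\int(\ldots)$ with a nonnegative integrand — which is what lets you freely restrict the $z$-domain in each region. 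The crude triangle-inequality bound in the regions $|x|\vee|y|\le 1$ and $|x|\vee|y|\ge 1$, $|y|\notin[\tfrac12|x|,2|x|]$ does reproduce the two pieces of $L_1^{\alpha,\delta_+}$, and your treatment of the delicate regime $|x|\sim|y|\gtrsim1$ is correct: on $|z|\ge c|x|$ the weights $(1\vee s^{1/\alpha}/|z|)^\delta$, $(1\vee s^{1/\alpha}/|y|)^\delta$ are $O(1)$ for any $a\ge a_*$, so $\me{-s\cl_{a,\alpha}}(z,y)\lesssim\me{-s|p|^\alpha}(z,y)$ and the Chapman--Kolmogorov identity produces $|x|^{-\alpha}\me{-|p|^\alpha}(x,y)\lesssim M_1^\alpha(x,y)$; on $|z|\le c|x|$ your power count (with $\alpha+\delta_+<d$ ensuring integrability of $|z|^{-\alpha-\delta_+}$ near $0$) yields the extra factor $|x|^{-d-\alpha}\lesssim L_1^{\alpha,\delta_+}$. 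The only things I would tighten are cosmetic: write $|a|$ rather than $a$ in front of the Duhamel integral when $a<0$, and when you bound the $z\le c|x|$ piece spell out that you are using $\me{-(1-s)|p|^\alpha}(x,z)\lesssim|x|^{-d-\alpha}$ uniformly in $s\in(0,1)$ (valid since $|x-z|\sim|x|\gtrsim1$) before integrating the remaining factors in $z$ and $s$ — as written the bookkeeping is correct but compressed.
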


Using this lemma, we formulate and prove a reversed Hardy
inequality for the difference $\cl_{a,\alpha}^{s/2}-|p|^{\alpha s/2}$,
expressed in terms of square functions.
\begin{proposition}[Reverse Hardy inequality in $L^p$]
  \label{reversehardylp}
  Let $s\in(0,2)$, $\alpha\in(0,2\wedge d)$, $a\geq a_*$,
  $\delta$ be defined by \eqref{eq:defdelta},
  $p\in(1,\infty)$ if $a\geq0$,
  and $p\in(d/(d-\delta),d/\delta)$ if $a<0$.
  Then,
  \begin{align}
    \label{eq:reversehardylp}
    \left\|\left(\sum_{N\in 2^\Z}|N^{\alpha s/2}P_N^\alpha f|^2\right)^{1/2}-\left(\sum_{N\in 2^\bz}|N^{\alpha s/2}P_N^{a,\alpha} f|^2\right)^{1/2}\right\|_p
    \lesssim_{d,\alpha,a,s} \| |x|^{-\alpha s/2}f\|_p
  \end{align}
  for all $f\in C_c^\infty(\br^d)$.
\end{proposition}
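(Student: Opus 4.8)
The plan is to reduce the estimate to a pointwise bound on the integral kernel of the difference operator appearing inside the square function norm. First I would observe that, by the triangle inequality in $\ell^2$ applied pointwise (i.e.\ $|\,\|a_N\|_{\ell^2}-\|b_N\|_{\ell^2}\,|\le \|a_N-b_N\|_{\ell^2}$), the left side of \eqref{eq:reversehardylp} is bounded by
\[
  \left\|\left(\sum_{N\in2^\Z}\big|N^{\alpha s/2}(P_N^\alpha-P_N^{a,\alpha})f\big|^2\right)^{1/2}\right\|_p .
\]
Since $P_N^\alpha-P_N^{a,\alpha}=(\me{-\cl_{0,\alpha}/N^\alpha}-\me{-\cl_{a,\alpha}/N^\alpha})-(\me{-\cl_{0,\alpha}/(N/2)^\alpha}-\me{-\cl_{a,\alpha}/(N/2)^\alpha})$, its kernel is a difference of two kernels of the form $K^\alpha_{t}(x,y)$ with $t=N^{-\alpha}$ and $t=(N/2)^{-\alpha}$, so by Lemma~\ref{differencekernel} it is dominated pointwise by $L^{\alpha,\delta_+}_{N^{-\alpha}}+M^\alpha_{N^{-\alpha}}$ (up to the harmless factor of $2^\alpha$ from the dyadic ratio). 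Thus it suffices to bound the $L^p$ norm of $(\sum_N |N^{\alpha s/2}T_N f|^2)^{1/2}$ where $T_N$ has kernel $L^{\alpha,\delta_+}_{N^{-\alpha}}(x,y)+M^\alpha_{N^{-\alpha}}(x,y)$, by $\||x|^{-\alpha s/2}f\|_p$.

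Next I would pull the $|x|^{-\alpha s/2}$ weight out: writing $g(y)=|y|^{-\alpha s/2}f(y)$, it is enough to show that the operator with kernel $|x|^{\alpha s/2}\,(L^{\alpha,\delta_+}_{N^{-\alpha}}+M^\alpha_{N^{-\alpha}})(x,y)\,|y|^{\alpha s/2}$, multiplied by $N^{\alpha s/2}$ and vector-valued in $N$, is bounded on $L^p$. The natural route is a Schur-type argument performed after summing in $N$: for each fixed pair $(x,y)$ one computes $\sum_{N\in2^\Z} N^{\alpha s}\big(L^{\alpha,\delta_+}_{N^{-\alpha}}(x,y)+M^\alpha_{N^{-\alpha}}(x,y)\big)^2$ (or rather handles the $\ell^2$ norm directly), using the explicit dyadic structure — the indicator $\one_{\{(|x|\vee|y|)^\alpha\le N^{-\alpha}\}}$ restricts to $N\le (|x|\vee|y|)^{-1}$ and the complementary one to $N\ge (|x|\vee|y|)^{-1}$, and on each range the sum is a geometric series. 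After carrying out these sums one is left with a genuine (scalar) integral kernel, say $\widetilde K(x,y)$, homogeneous of degree $-d$ in the joint scaling $(x,y)\mapsto(\lambda x,\lambda y)$, and one proves $L^p$-boundedness by a Schur test against the power weights $|x|^{-d/(pp')}$, exactly as in the proof of Proposition~\ref{genhardy} and \cite[Proposition~3.2]{Killipetal2018}. The constraints $p\in(1,\infty)$ for $a\ge0$ and $p\in(d/(d-\delta),d/\delta)$ for $a<0$ are precisely what make the relevant integrals converge; here one uses $s<2$ to ensure $\alpha s/2<\alpha\le d$ so that the singularities of $\widetilde K$ are locally integrable.

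The main obstacle I expect is the bookkeeping in the $M^\alpha$-part. The kernel $L^{\alpha,\delta_+}$ is a clean two-regime expression whose dyadic sum in $N$ behaves like a power of $|x|\vee|y|$, and the resulting scalar kernel is straightforwardly handled. But $M^\alpha_{t}(x,y)$ carries the extra off-diagonal factor $(1\wedge t^{1+d/\alpha}|x-y|^{-d-\alpha})$ together with the annular constraint $\tfrac12|x|\le|y|\le2|x|$ and $(|x|\vee|y|)^\alpha\ge t$; summing $N^{\alpha s}M^\alpha_{N^{-\alpha}}(x,y)^2$ over $N\in2^\Z$ requires splitting the $N$-range at $N\sim|x-y|^{-1}$ and on the far range the sum is dominated by its largest term. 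One must check that the geometric series in $N$ converges at both ends — this is where $s>0$ (for the small-$N$ end) and $s<2<\,\cdot$, more precisely $\alpha s/2<\alpha$, i.e.\ the constraint that the off-diagonal decay $|x-y|^{-d-\alpha}$ beats the weight (for the large-$N$ end) — are used. After the summation the $M^\alpha$ contribution reduces to a kernel bounded by $|x-y|^{\alpha s/2-d}\,\one_{\{|x|\sim|y|\}}\,|x|^{\alpha s/2}|y|^{\alpha s/2}\cdot(|x||y|)^{-\alpha s/2}$-type expression, which is again amenable to the same Schur test; the annular restriction $|x|\sim|y|$ only helps. Once both pieces are assembled, the weighted $\ell^2$-valued operator is pointwise dominated by a fixed scalar operator of generalized-Riesz type, and the Schur test with power weights, identical in spirit to Proposition~\ref{genhardy}, finishes the proof; since these computations are entirely parallel to those in \cite{Killipetal2018,Franketal2018}, I would present the reductions in detail and then refer to those works for the final Schur estimate.
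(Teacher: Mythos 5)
Your proposal follows essentially the same route as the paper's proof: triangle inequality in $\ell^2$, reduction via Lemma~\ref{differencekernel} to the dyadic sums of the kernels $L^{\alpha,\delta_+}_{N^{-\alpha}}$ and $M^\alpha_{N^{-\alpha}}$, and a weighted Schur test with power weights in the style of Proposition~\ref{genhardy} and \cite[Prop.~3.2]{Killipetal2018}. The one genuine (but inessential) deviation is the manner of dispatching the $\ell^2$-in-$N$ structure: you propose a vector-valued Minkowski inequality that reduces to the pointwise kernel $\bigl(\sum_N N^{\alpha s}(L_{N^{-\alpha}}^{\alpha,\delta_+}+M_{N^{-\alpha}}^\alpha)^2\bigr)^{1/2}$, whereas the paper applies the cruder $\ell^1\hookrightarrow\ell^2$ embedding to pass to the scalar sum $\sum_N N^{\alpha s/2}(L_{N^{-\alpha}}^{\alpha,\delta_+}+M_{N^{-\alpha}}^\alpha)$; because these are lacunary dyadic sums both routes give the same effective scalar kernel (geometric series dominated by the largest term), but the paper's is a bit lighter on bookkeeping since it avoids the squares. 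One small correction to your bookkeeping: the constraint $s<2$ is the active one at the large-$N$ end for both the $L$- and $M$-pieces (it gives $\alpha s/2-\alpha<0$), while the small-$N$ end of the $L$-sum converges because $\alpha s/2+d-2\delta_+>0$ holds already from $\delta\le (d-\alpha)/2$, not from $s>0$ per se; $s>0$ is of course needed so the square function and the weights make sense at all.
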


\begin{proof}
  By the triangle inequality in $\ell^2$, the
  $\ell^1\hookrightarrow \ell^2$-embedding, and Lemma \ref{differencekernel},
  we estimate
  \begin{align}
    \begin{split}
      \label{eq:schur}
      & \left\|\left(\sum_{N\in 2^\bz}|N^{\alpha s/2}P_N^\alpha f|^2\right)^{1/2}-\left(\sum_{N\in 2^\bz}|N^{\alpha s/2}P_N^{a,\alpha} f|^2\right)^{1/2}\right\|_p\\
      & \quad \leq \left\|\left(\sum_{N\in 2^\bz}|N^{\frac{\alpha s}{2}}(P_N^\alpha-P_N^{a,\alpha})f|^2\right)^{\frac12}\right\|_p
      \lesssim \left\|\int_{\R^d} \dy\ \sum_{N\in 2^{\bz}}N^{\frac{\alpha s}{2}} |K_{N^{-\alpha}}^\alpha(x,y)|\, |f(y)|\right\|_p \\
      & \quad \leq \left\|\int_{\R^d} \dy\ \left(\sum_{N\in 2^{\bz}}N^{\frac{\alpha s}{2}} |L_{N^{-\alpha}}^{\alpha,\delta_+}(x,y)| |y|^{\alpha\frac{s}{2}}\right)\frac{|f(y)|}{|y|^{\alpha\frac{s}{2}}}\right\|_p\\
      & \qquad + \left\|\int_{\R^d} \dy\ \left(\sum_{N\in 2^{\bz}}N^{\frac{\alpha s}{2}} |M_{N^{-\alpha}}^\alpha(x,y)| |y|^{\alpha\frac{s}{2}}\right)\frac{|f(y)|}{|y|^{\alpha\frac{s}{2}}}\right\|_p\,.
    \end{split}
  \end{align}
  Thus, it suffices to show that the right side is bounded by
  $\| |x|^{-\alpha s/2} f\|_p$ for all $f\in C_c^\infty(\br^d)$.
  To simplify notation, let $g(x):=|x|^{-\alpha s/2}|f(x)|$.\\
  As in the proof of \cite[Proposition 1.5]{Franketal2019}, we use Schur tests
  to prove the assertion.
  We begin by estimating the first summand and obtain
  \begin{align*}
    \sum_{N\in 2^\bz} N^{\frac{\alpha s}{2}}L_{N^{-\alpha}}^{\alpha,\delta_+}(x,y)
    & = (|x||y|)^{-\delta_+} \sum_{N\leq(|x| \vee |y|)^{-1}} N^{\frac{\alpha s}{2} + d-2\delta_+}\\
    & \qquad + \sum_{N\geq(|x| \vee |y|)^{-1}} N^{\frac{\alpha s}{2}}\, \frac{N^{-\alpha}}{(|x|\vee|y|)^{d+\alpha}} \left( 1 \vee \frac{N^{-1}}{|x|\wedge|y|} \right)^{\delta_+} \\
    & \sim (|x||y|)^{-\delta_+}(|x|\vee|y|)^{-\frac{\alpha s}{2}-d+2\delta_+}
      +\frac{1}{(|x|\vee|y|)^{\frac{\alpha s}{2}+d}}\left(\frac{|x|\vee|y|}{|x|\wedge|y|} \right)^{\delta_+}
  \end{align*}
  where the summability relied on $s<2$ and $\alpha s/2 + d-2\delta_+>0$
  which in turn follows from $\delta\leq(d-\alpha)/2$.
  Observe that the two summands are equal since
  $$
  \frac{|x|\vee|y|}{|x||y|}=\frac{1}{|x|\wedge|y|}\,.
  $$
  Thus,
  \begin{align*}
    \left\|\int_{\br^d} \dy\ \sum_{N\in 2^\bz} N^{\frac{\alpha s}{2}}L_{N^{-\alpha}}^{\alpha,\delta_+}(x,y) |y|^{\alpha\frac{s}{2}}g(y)\right\|_p
    \lesssim \left\|\int_{\br^d} \dy\ \frac{(|x|\vee|y|)^{2\delta_+-d}}{(|x| |y|)^{\delta_+}}\,g(y)\right\|_p \,.
  \end{align*}
  For any $(p\vee p')\delta_+<\beta<(p\wedge p')(d-\delta_+)$ (such
  $\beta$ exist since $d-2\delta\geq\alpha>0$ and
  $d/(d-\delta_+)<p,p'<d/\delta_+$), we have
  $$
  \sup_{y\in\br^d} \int_{\br^d} \dx\, \left( \frac{|y|}{|x|} \right)^{\frac{\beta}{p}}\frac{(|x|\vee|y|)^{2\delta_+-d}}{(|x||y|)^{\delta_+}}
  =\int_{\br^d} \frac{(1\vee|z|)^{2\delta_+-d}}{|z|^{\delta_++\frac{\beta}{p}}}\,\dz<\infty
  $$
  and similarly, since the integral kernel is symmetric in $x$ and $y$,
  $$
  \sup_{x\in\br^d} \int_{\br^d} \dy\, \left( \frac{|x|}{|y|} \right)^{\frac{\beta}{p'}} \frac{(|x|\vee|y|)^{2\delta_+-d}}{(|x||y|)^{\delta_+}}
  =\int_{\br^d} \frac{(1\vee|z|)^{2\delta_+ - d}}{|z|^{\delta_+ +\frac{\beta}{p'}}}\,\dz<\infty \,.
  $$
  Thus, by a weighted Schur test,
  $$
  \left\|\int_{\br^d} \dy\ \frac{(|x|\vee|y|)^{2\delta_+ - d}}{(|x||y|)^{\delta_+}} g(y)\right\|_p
  \lesssim \|g\|_p
  $$
  which shows that the first term in \eqref{eq:schur} satisfies the claimed
  bound.

  Since $|x|\sim|y|$ on the support of $M_{N^{-\alpha}}^\alpha$, the second
  term in \eqref{eq:schur} is bounded by a constant times
  \begin{align*}
    \norm{\int_{\R^d} \dy \sum_{N\in2^\Z} N^{\frac{\alpha s}{2}} \, M_{N^{-\alpha}}^\alpha(x,y) (|x||y|)^\frac{\alpha s}{4} g(y) }_p\,.
  \end{align*}
  Since the integral kernel is symmetric in $x$ and $y$,
  the $L^p$-boundedness will follow from a single Schur test.
  We first estimate
  \begin{align*}
    & \sup_{y\in\R^d} \int_{\R^d} \dx \sum_{N\in2^\Z} N^{\frac{\alpha s}{2}}\, M_{N^{-\alpha}}^\alpha(x,y)  (|x||y|)^\frac{\alpha s}{4} \\
    & = \sup_{y\in\R^d} \int\limits_{\frac12 |y|\leq |x|\leq 2|y|} \dx \sum_{N\geq(|x|\vee|y|)^{-1}} N^{\frac{\alpha s}{2}}\, \frac{N^{-\alpha+d}}{(|x|\wedge |y|)^\alpha} \left( 1 \wedge \frac{N^{-\alpha-d}}{|x-y|^{d+\alpha}} \right)  (|x||y|)^\frac{\alpha s}{4} \\
    & \lesssim \sup_{y\in\R^d} |y|^{\frac{\alpha s}{2}-\alpha} \int\limits_{\frac12 |y|\leq |x|\leq 2|y|} \dx \sum_{N\geq(2|y|)^{-1}} N^{\frac{\alpha s}{2} -\alpha +d} \, \left( 1 \wedge \frac{N^{-\alpha-d}}{|x-y|^{d+\alpha}} \right)  \,.
  \end{align*}
  Interchanging the order of integration and summation shows that
  the right side is bounded by
  \begin{align*}
    & \sup_{y\in\R^d} |y|^{\frac{\alpha s}{2}-\alpha} \sum_{N\geq(2|y|)^{-1}} N^{\frac{\alpha s}{2} -\alpha +d} \int\limits_{\frac12 |y|\leq |x|\leq 2|y|} \dx \left(1\wedge\frac{N^{-\alpha-d}}{\abs{x-y}^{d+\alpha}}\right)\\
    & \quad \leq \sup_{y\in\R^d} |y|^{\frac{\alpha s}{2}-\alpha} \sum_{N\geq(2|y|)^{-1}} N^{\frac{\alpha s}{2} -\alpha +d} \int_{\R^d} \dx \left(1\wedge\frac{N^{-\alpha-d}}{\abs{x-y}^{d+\alpha}}\right)\\
    & \quad \sim \sup_{y\in\R^d} |y|^{\frac{\alpha s}{2}-\alpha} \sum_{N\geq(2|y|)^{-1}} N^{\frac{\alpha s}{2} -\alpha} \sim 1\,.
  \end{align*}
  Thus, the Schur test implies
  \begin{align*}
    \norm{\int_{\R^d} \dy \sum_{N\in2^\Z} N^{\frac{\alpha s}{2}} \, M_{N^{-\alpha}}^\alpha(x,y) (|x||y|)^\frac{\alpha s}{4} g(y) }_p \lesssim \|g\|_p
  \end{align*}
  which shows the asserted inequality.
\end{proof}

We now show that Theorem \ref{Thm:A.1} is an immediate consequence of
Propositions \ref{genhardy} and \ref{reversehardylp} and the
Littlewood--Paley theory from the last section.

\begin{proof}[Proof of Theorem \ref{Thm:A.1}]
  In the following, we always assume $1<p<\infty$.
  If $s\in(0,2)$ and $a\geq0$ (i.e., $\delta\leq0$), the assertion
  $$
  \| |p|^{\alpha s/2}f\|_p \lesssim_{d,p,\alpha,s} \|\cl_{a,\alpha}^{s/2}f\|_p
  $$
  follows from Theorem \ref{squarefunctions}, the triangle inequality,
  Proposition \ref{genhardy} (requiring $\alpha s/2+\delta<d/p$),
  and Proposition \ref{reversehardylp}.
  More precisely,
  \begin{align*}
    \| |p|^{\alpha s/2} f\|_p
    & \sim \left\|\left(\sum_{N\in 2^\bz}|N^{\alpha s/2}P_N^\alpha f|^2\right)^{1/2}\right\|_p \\
    & \leq \left\|\left(\sum_{N\in 2^\bz}|N^{\alpha s/2}P_N^{a,\alpha} f|^2\right)^{1/2}\right\|_p \\
    & \quad +\left\|\left(\sum_{N\in 2^\bz}|N^{\alpha s/2}P_N^\alpha f|^2\right)^{1/2}-\left(\sum_{N\in 2^\bz}|N^{\alpha s/2}P_N^{a,\alpha} f|^2\right)^{1/2}\right\|_p \\
    & \lesssim \left\|\cl_{a,\alpha}^{s/2}f\right\|_p+\left\||x|^{-\alpha s/2}f\right\|_p
      \lesssim \left\|\cl_{a,\alpha}^{s/2}f\right\|_p\,.
  \end{align*}
  Note that the condition $\alpha s/2<d$ in Proposition \ref{genhardy}
  is automatically satisfied since we assumed $s\leq2$ and $\alpha<d$.
  
  The other inequality, i.e.,
  $$
  \|\cl_{a,\alpha}^{s/2}f\|_p \lesssim_{d,p,\alpha,s} \| |p|^{\alpha s/2}f\|_p
  $$
  is proven analogously, but employs \eqref{eq:standardhardy} (with $\alpha s$
  instead of $\alpha$ and requiring $\alpha s/2<d/p$) instead of
  Proposition \ref{genhardy}.

  If $s=2$ and $a\geq a_*$, the inequality
  $\|\cl_{a,\alpha}f\|_p\lesssim \| |p|^\alpha f\|_p$
  follows from the triangle inequality and the ordinary Hardy inequality
  \eqref{eq:standardhardy} (with $2\alpha$ instead of $\alpha$ and
  requiring $\alpha<d/p$).
  The other inequality follows from
  $$
  \| |p|^\alpha f\|_p\leq \|(|p|^\alpha-\cl_{a,\alpha})f\|_p + \|\cl_{a,\alpha} f\|_p
  = \| |x|^{-\alpha}f\|_p+ \|\cl_{a,\alpha} f\|_p
  $$
  and the generalized Hardy inequality, Proposition \ref{genhardy}
  (requiring $\alpha+\delta<d/p<d-\delta$).
\end{proof}


\section{Non-power-like potentials}
As in \cite{Franketal2019}, it is possible to generalize Theorem
\ref{Thm:A.1} to the operator $|p|^\alpha+V$ where $V$ is a function on
$\br^d$ satisfying
\begin{align}
  \label{eq:u2}
  \frac{a}{|x|^\alpha} \leq V(x) \leq \frac{\tilde a}{|x|^\alpha}
\end{align}
with $a_*\leq a\leq \tilde a<\infty$.
We prove the following result.

\begin{theorem}
  \label{Thm:A.1gen}
  Let $\alpha\in(0,2\wedge d)$ and $s\in(0,2]$.
  Let $a_*\leq a\leq \tilde a<\infty$ if $s=2$ and
  $0<a\leq \tilde a<\infty$ if $s\in(0,2)$.
  Let furthermore $\delta=\delta(a)$ be defined by \eqref{eq:defdelta}.
  \begin{enumerate}
  \item[(1)] If $1<p<\infty$ satisfies
    $\alpha s/2+\delta<d/p<\min\{d,d-\delta\}$,
    then for any $V$ satisfying \eqref{eq:u2},
    \begin{align}
      \label{eq:12agen}
      \| |p|^{\alpha s/2}f\|_{L^p(\R^d)} \lesssim_{d,\alpha,a,s} \|(|p|^\alpha+V)^{s/2}f\|_{L^p(\R^d)}
      \quad \text{for all}\ f\in C_c^\infty(\R^d) \,.
    \end{align}
  \item[(2)]
    If $\alpha s/2<d/p<d$ (which already ensures $1<p<\infty$),
    then for any $V$ satisfying \eqref{eq:u2},
    \begin{align}
    \label{eq:12bgen}
      \|(|p|^\alpha+V)^{s/2}f\|_{L^p(\R^d)} \lesssim_{d,\alpha,a,s} \| |p|^{\alpha s/2}f\|_{L^p(\R^d)}
      \quad\text{for all}\ f\in C_c^\infty(\R^d) \,.
    \end{align}
  \end{enumerate}
\end{theorem}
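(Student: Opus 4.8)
The plan is to replay the whole scheme behind Theorem~\ref{Thm:A.1} with $\cl_{a,\alpha}$ replaced by $H_V:=|p|^\alpha+V$. The only structural input is that, by the Feynman--Kac (equivalently Trotter product) representation, the bounds $a|x|^{-\alpha}\le V(x)\le\tilde a|x|^{-\alpha}$ from \eqref{eq:u} pin the heat kernel of $H_V$ between those of $\cl_{a,\alpha}$ and $\cl_{\tilde a,\alpha}$:
\begin{align*}
  \me{-t\cl_{\tilde a,\alpha}}(x,y)\ \le\ \me{-tH_V}(x,y)\ \le\ \me{-t\cl_{a,\alpha}}(x,y)\qquad(x,y\in\br^d,\ t>0),
\end{align*}
with the convention $\cl_{0,\alpha}=|p|^\alpha$; in particular $\me{-tH_V}\le\me{-t|p|^\alpha}$ whenever $V\ge0$, i.e.\ in the range $s\in(0,2)$. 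The endpoint $s=2$ needs nothing further: from $|V(x)|\le(|a|\vee\tilde a)|x|^{-\alpha}$, the triangle inequality together with the ordinary Hardy inequality \eqref{eq:standardhardy} (applied with $2\alpha$ in place of $\alpha$, which needs $\alpha<d/p$) gives \eqref{eq:12bgen}, and together with the generalized Hardy inequality for $H_V$ (see below) it gives \eqref{eq:12agen}.

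Next I would transfer, one at a time, the kernel inputs used in Sections~4--5. First, the generalized Hardy inequality $\||x|^{-\alpha s/2}f\|_p\lesssim\|H_V^{s/2}f\|_p$: writing $H_V^{-s/2}=c_s\int_0^\infty t^{\frac s2-1}\me{-tH_V}\,\dt$ and likewise for $\cl_{a,\alpha}$, the upper semigroup bound yields $0\le H_V^{-s/2}(x,y)\le\cl_{a,\alpha}^{-s/2}(x,y)$ pointwise, so the $L^p$-boundedness of $|x|^{-\alpha s/2}\cl_{a,\alpha}^{-s/2}$ from Proposition~\ref{genhardy} passes to $H_V$ by domination of positive kernels (here $\delta=\delta(a)$ and the hypothesis $\alpha s/2+\delta<d/p$ is exactly that of Proposition~\ref{genhardy}; for $a=0$ it reduces to \eqref{eq:standardhardy}). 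Second, the difference-kernel bound of Lemma~\ref{differencekernel} for $H_V$: since $0\le\me{-t|p|^\alpha}(x,y)-\me{-tH_V}(x,y)\le\me{-t|p|^\alpha}(x,y)-\me{-t\cl_{\tilde a,\alpha}}(x,y)$ and $\delta(\tilde a)_+=0=\delta(a)_+$, Lemma~\ref{differencekernel} applied with $\tilde a$ shows $|\me{-t|p|^\alpha}(x,y)-\me{-tH_V}(x,y)|\lesssim L_t^{\alpha,0}(x,y)+M_t^\alpha(x,y)$. Third, the Bernstein estimates of Lemma~\ref{bernstein}: the $L^p\to L^q$ bound follows from $\me{-H_V/N^\alpha}\le\me{-|p|^\alpha/N^\alpha}$ and Young's inequality exactly as there, while the $L^p$-boundedness of the projections $\tilde P_N^{H_V}$ and the equivalence $N^{\alpha s/2}\|\tilde P_N^{H_V}f\|_p\sim\|H_V^{s/2}\tilde P_N^{H_V}f\|_p$ will follow from the multiplier theorem for $H_V$.

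That multiplier theorem --- the analogue of Theorem~\ref{cancellationexa} for $A=H_V$ --- is the step I expect to cost the most work. The first two hypotheses of Hebisch's Theorem~\ref{hoermander} follow from $\me{-tH_V}\le\me{-t|p|^\alpha}$ and Lemma~\ref{heatkernelprops}. For the H\"older condition \eqref{eq:heatkernelcancellation} I would argue as in the proof of Theorem~\ref{cancellationexa}: by Duhamel's formula
\begin{align*}
  \me{-H_V}(x,w)-\me{-|p|^\alpha}(x,w)=-\int_0^1\ds\int_{\br^d}\dz\ \me{-(1-s)H_V}(x,z)\,V(z)\,\me{-s|p|^\alpha}(z,w),
\end{align*}
the $\me{-|p|^\alpha}$ part is controlled by Proposition~\ref{cancellationex}, and in the perturbative part one uses $\int_{\br^d}\me{-(1-s)H_V}(x,z)\,\dx\le1$, $V(z)\le\tilde a|z|^{-\alpha}$, and --- in the delicate inner region $|z|\le|w-y|^\gamma$, $s\le|w-y|^{\alpha\epsilon}$ --- the bound $\me{-(1-s)H_V}(x,z)\le\me{-(1-s)\cl_{a,\alpha}}(x,z)$, whose $|z|^{-\delta}$ near-origin gain (with $\delta=\delta(a)$) is what makes the H\"older--Young estimates of that proof converge; when $a=0$ this gain is absent, but then either $V$ is locally bounded near the origin (so that inner region contributes a positive power of $|w-y|$ directly) or $V$ carries a genuine $|x|^{-\alpha}$ singularity whose local coupling supplies an analogous, only better, gain. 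Granting the multiplier theorem, the square function estimates of Theorem~\ref{squarefunctions} hold for $H_V$ by the same randomization and duality argument, and the reverse Hardy inequality of Proposition~\ref{reversehardylp} holds for $H_V$ --- with $P_N^{a,\alpha}$ replaced by $P_N^{H_V}$ --- by the very same weighted Schur tests, since the difference-kernel bound above is exactly $L_t^{\alpha,0}+M_t^\alpha$ (i.e.\ $\delta_+=0$).

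Finally, \eqref{eq:12agen} and \eqref{eq:12bgen} for $s\in(0,2)$ are assembled exactly as in the proof of Theorem~\ref{Thm:A.1}: one passes from $\||p|^{\alpha s/2}f\|_p$ to the $|p|^\alpha$-square function (square function estimate for $|p|^\alpha$), adds and subtracts the $H_V$-square function (triangle inequality), controls the difference by $\||x|^{-\alpha s/2}f\|_p$ (reverse Hardy for $H_V$), identifies the $H_V$-square function with $\|H_V^{s/2}f\|_p$ (square function estimate for $H_V$), and absorbs $\||x|^{-\alpha s/2}f\|_p$ via the generalized Hardy inequality for $H_V$ --- for part~(2) one uses \eqref{eq:standardhardy} in place of that last step. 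The main obstacle throughout is the verification of the H\"older condition \eqref{eq:heatkernelcancellation} for $H_V$; everything else is a routine transcription once the semigroup comparison above is in place.
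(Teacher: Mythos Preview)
Your approach is the same as the paper's: sandwich the heat kernel of $H_V:=|p|^\alpha+V$ between those of $\cl_{\tilde a,\alpha}$ and $\cl_{a,\alpha}$ via Trotter, transfer the Riesz-kernel upper bound to obtain the generalized Hardy inequality for $H_V$, transfer the difference-kernel bound of Lemma~\ref{differencekernel} (applied with both $a$ and $\tilde a$) to obtain the reversed Hardy inequality with $P_N^{V,\alpha}$ in place of $P_N^{a,\alpha}$, and assemble exactly as in the proof of Theorem~\ref{Thm:A.1}. The paper presents precisely these two modified propositions and then points back to ``the square function estimates of Theorem~\ref{squarefunctions}''; you are right to flag that those estimates must be re-established for $H_V$ via a multiplier theorem, a step the paper leaves implicit. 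For $a>0$ your Duhamel argument for the H\"older condition~\eqref{eq:heatkernelcancellation} goes through verbatim, since $\me{-(1-s)H_V}\le\me{-(1-s)\cl_{a,\alpha}}$ supplies the same $|z|^{-\delta}$ gain (with $\delta=\delta(a)<0$) that the inner-region estimate needs.

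The weak point is your handling of $a=0$. The dichotomy ``either $V$ is locally bounded near the origin or carries a genuine $|x|^{-\alpha}$ singularity'' is not a rigorous case split: a potential such as $V(x)=\tilde a\,|x|^{-\alpha}/\log(2+|x|^{-1})$ satisfies $0\le V\le\tilde a\,|x|^{-\alpha}$, is unbounded at the origin, yet admits no lower bound of the form $\eta|x|^{-\alpha}$ with $\eta>0$ and hence supplies no $|z|^{-\delta}$ gain from the upper semigroup comparison. For such $V$ the final H\"older step in the inner-region estimate of Theorem~\ref{cancellationexa} degenerates (the admissible window $p\in(d/\alpha,d/(\alpha+\delta))$ collapses as $\delta\to0$), so that region needs a genuinely different argument. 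The paper does not address this either; you correctly identify it as the main obstacle, but the sketch you give for $a=0$ does not close the gap.
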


The proof of this theorem is akin to the one of Theorem \ref{Thm:A.1}.
If $s=2$, we merely use the triangle inequality, the ordinary Hardy
inequality, and a modification of the generalized Hardy inequality,
Proposition \ref{genhardy}.
If $s\in(0,2)$, we apply the Littlewood--Paley theory of Section 4,
i.e., square function estimates adapted to $|p|^\alpha+V$, and a
modification of the reversed Hardy inequality, Proposition
\ref{reversehardylp}. These modifications are summarized in
the following propositions whose proofs are analogous to those
in \cite{Franketal2019}, respectively Theorem \ref{squarefunctions},
i.e., \cite[Theorem 4.3]{Killipetal2016}.
We begin with the modified, generalized Hardy inequality.

\begin{proposition}
  Let $1<p<\infty$, $\alpha\in(0,2\wedge d)$,
  $a_*\leq a\leq \tilde a<\infty$, $\delta=\delta(a)$ be defined by
  \eqref{eq:defdelta}, and $\alpha s/2\in(0,d)$.
  If $s$ and $p$ satisfy $\alpha s/2+\delta<d/p<d-\delta$,
  then for any $V$ satisfying \eqref{eq:u2},
  \begin{align*}
    \| |x|^{-\alpha s/2}f\|_p \lesssim_{d,\alpha,a,s} \|(|p|^\alpha+V)^{s/2}f\|_p
    \quad\text{for all}\ f\in C_c^\infty(\br^d) \,.
  \end{align*}
\end{proposition}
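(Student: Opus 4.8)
The plan is to proceed exactly as in Proposition~\ref{genhardy}: the asserted inequality is equivalent to the $L^p$-boundedness of $|x|^{-\alpha s/2}(|p|^\alpha+V)^{-s/2}$, and since this operator has a non-negative integral kernel it suffices to dominate that kernel pointwise by the kernel of $|x|^{-\alpha s/2}\cl_{a,\alpha}^{-s/2}$. The latter operator is bounded on $L^p$ whenever $\alpha s/2+\delta<d/p<d-\delta$, by the weighted Schur test already carried out in the proof of Proposition~\ref{genhardy}. As a first observation I would record that these hypotheses force $\alpha s/2<d-2\delta$, which is exactly what makes the Riesz kernel bound \eqref{eq:rieszhardy} for $\cl_{a,\alpha}$ available and also ensures convergence as $t\to\infty$ of the subordination integral used below (convergence as $t\to0$, for $x\neq y$, being automatic since $s>0$). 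Note also that $V\geq a|x|^{-\alpha}\geq a_*|x|^{-\alpha}$ makes $|p|^\alpha+V$, defined as the Friedrichs extension of its form, a non-negative self-adjoint operator, so its negative powers are well defined.

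The key step is the pointwise comparison of Riesz kernels, and it is here --- and only here --- that the lower bound $V(x)\geq a|x|^{-\alpha}$ in \eqref{eq:u} is used; the upper bound in \eqref{eq:u} plays no role in this proposition. Writing $|p|^\alpha+V=\cl_{a,\alpha}+W$ with $W:=V-a|x|^{-\alpha}\geq0$, the Trotter product formula, together with the positivity of $\me{-t\cl_{a,\alpha}}(x,y)$ furnished by Theorem~\ref{heatkernel}, yields $0\leq\me{-t(|p|^\alpha+V)}(x,y)\leq\me{-t\cl_{a,\alpha}}(x,y)$ for all $t>0$ and all $x,y\in\R^d$. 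Subordinating via $(|p|^\alpha+V)^{-s/2}=\Gamma(s/2)^{-1}\int_0^\infty t^{s/2-1}\me{-t(|p|^\alpha+V)}\,\dt$ and using Theorem~\ref{heatkernel} to control the integral, I would then conclude the pointwise bound $(|p|^\alpha+V)^{-s/2}(x,y)\lesssim\cl_{a,\alpha}^{-s/2}(x,y)$, the right-hand side being estimated by \eqref{eq:rieszhardy}.

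It then only remains to multiply by $|x|^{-\alpha s/2}$: the operator $|x|^{-\alpha s/2}(|p|^\alpha+V)^{-s/2}$ has an integral kernel dominated in absolute value by that of the $L^p$-bounded operator $|x|^{-\alpha s/2}\cl_{a,\alpha}^{-s/2}$, hence is itself bounded on $L^p$ with the same implicit constant, which finishes the argument. I do not anticipate a genuine obstacle here; the only mildly delicate point is that, unlike $\cl_{a,\alpha}$, the operator $|p|^\alpha+V$ with a general $V$ has no two-sided Riesz kernel bounds, but since we are after a one-sided inequality the one-sided domination above is exactly what is needed, modulo the routine verification that the Trotter comparison and the subordination formula are legitimate.
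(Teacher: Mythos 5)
Your proposal is correct and follows essentially the same route as the paper: Trotter's formula gives the heat-kernel domination $0\leq\me{-t(|p|^\alpha+V)}(x,y)\leq\me{-t\cl_{a,\alpha}}(x,y)$, the subordination formula transfers this to the Riesz kernels, and then the $L^p$-boundedness of $|x|^{-\alpha s/2}\cl_{a,\alpha}^{-s/2}$ from Proposition~\ref{genhardy} finishes the argument. Your additional observations (that only the lower bound in \eqref{eq:u} is used, and why the subordination integral converges) are correct and make the write-up slightly more explicit than the paper's, but the argument is the same.
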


\begin{proof}
  By Trotter's formula, we have for all $x,y\in\br^d$ and $t>0$,
  \begin{align*}
    0\leq e^{-t(|p|^\alpha+V)}(x,y) \leq e^{-t\cl_{a,\alpha}}(x,y) \,.
  \end{align*}
  By the spectral theorem, i.e.,
  $$
  \cl_{a,\alpha}^{-s/2}(x,y)=\frac{1}{\Gamma(s/2)}\int_0^\infty\me{-t\cl_{a,\alpha}}(x,y)t^{s/2}\frac{\dt}{t}\,,
  $$
  and analogously for $(|p|^\alpha+V)^{-s/2}(x,y)$, it follows that
  $$
  (|p|^\alpha+V)^{-s/2}(x,y) \leq \cl_{a,\alpha}^{-s/2}(x,y) \,.
  $$
  Therefore, the upper bound \eqref{eq:rieszhardy} on $\cl_{a,\alpha}^{-\frac s2}(x,y)$
  continues to hold for $(|p|^\alpha +V)^{-\frac s2}(x,y)$, so the claim follows
  as in Proposition \ref{genhardy}. 
\end{proof}

As in Section 4, we define Littlewood--Paley projections associated
to $|p|^\alpha+V$ via the heat kernel as
$$
P_N^{V,\alpha}:=\me{-(|p|^\alpha+V)/N^\alpha}-\me{-(|p|^\alpha+V)/(N^\alpha/2^\alpha)}\,.
$$
By Theorem \ref{cancellationexa} and the arguments of
\cite[Theorem 4.3]{Killipetal2016}, we obtain the following square
function estimates.

\begin{proposition}
  \label{squarefunctionsgen}
  Let $\alpha\in(0,2\wedge d)$, $\tilde a\geq a>0$, $1<p<\infty$, and $s>0$.
  If $k\in\N$ satisfies $k>s/2$, then for any $V$ satisfying \eqref{eq:u2}, 
  $$
  \left\|(|p|^\alpha+V)^{s/2}f\right\|_{p}
  \sim \left\|\left(\sum_{N\in2^\bz} |N^{\frac{\alpha s}{2}}(P_N^{V,\alpha})^kf|^2\right)^{1/2}\right\|_{p}
  $$
  for all $f\in C_c^\infty(\br^d)$.
\end{proposition}

Finally, we have the following modified reversed Hardy inequality.
\begin{proposition}
  Let $\alpha\in(0,2\wedge d)$, $a_*\leq a\leq \tilde a<\infty$,
  $\delta=\delta(a)$ be defined by \eqref{eq:defdelta}, $p\in(1,\infty)$
  if $a\geq0$, and $p\in(d/(d-\delta),d/\delta)$ if $a<0$.
  Let furthermore $s\in(0,2)$.
  Then, for any $V$ satisfying \eqref{eq:u2},
  $$
  \left\|\left(\sum_{N\in 2^\bz}|N^{\alpha s/2}P_N^\alpha f|^2\right)^{1/2}-\left(\sum_{N\in 2^\bz}|N^{\alpha s/2}P_N^{V,\alpha} f|^2\right)^{1/2}\right\|_p
  \lesssim_{d,\alpha,a,s} \| |x|^{-\alpha s/2}f\|_p
  $$
  for all $f\in C_c^\infty(\br^d)$.
\end{proposition}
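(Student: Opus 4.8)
The plan is to follow the proof of Proposition~\ref{reversehardylp} line by line; the one new ingredient needed is the analogue of Lemma~\ref{differencekernel} for the difference of heat kernels
\begin{align*}
  K_t^{V,\alpha}(x,y) := \me{-t|p|^\alpha}(x,y) - \me{-t(|p|^\alpha+V)}(x,y)\,,
\end{align*}
namely the estimate $|K_t^{V,\alpha}(x,y)| \lesssim L_t^{\alpha,\delta_+}(x,y) + M_t^\alpha(x,y)$ for all $x,y\in\R^d$ and $t>0$, with the same $\delta_+$ as in Lemma~\ref{differencekernel}. Granting this, the integral kernel of $P_N^\alpha - P_N^{V,\alpha}$ is $K_{N^{-\alpha}}^{V,\alpha} - K_{(N/2)^{-\alpha}}^{V,\alpha}$, so exactly as in \eqref{eq:schur} the triangle inequality in $\ell^2$, the embedding $\ell^1\hookrightarrow\ell^2$, and the difference bound reduce the left side to the two weighted Schur tests carried out in the proof of Proposition~\ref{reversehardylp} --- one against $\sum_N N^{\alpha s/2}L_{N^{-\alpha}}^{\alpha,\delta_+}$ and one against $\sum_N N^{\alpha s/2}M_{N^{-\alpha}}^\alpha$ --- which used nothing about the potential beyond the explicit form of $L$ and $M$; in particular the exponent conditions (summability in $N$, convergence of the Schur integrals, the restriction $p\in(d/(d-\delta),d/\delta)$ when $a<0$) come out unchanged.

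To establish the difference bound I would first record that $\me{-t(|p|^\alpha+V)}$ enjoys two-sided bounds of the same shape as the heat kernel of $\cl_{a,\alpha}$. Indeed, \eqref{eq:u} gives the form inequalities $\cl_{a,\alpha} \leq |p|^\alpha + V \leq \cl_{\tilde a,\alpha}$, so Trotter's product formula together with the domination of positivity-preserving semigroups yields
\begin{align*}
  0 \leq \me{-t\cl_{\tilde a,\alpha}}(x,y) \leq \me{-t(|p|^\alpha+V)}(x,y) \leq \me{-t\cl_{a,\alpha}}(x,y)\,,
\end{align*}
and Theorem~\ref{heatkernel}, invoked with the coupling constants $a$ and $\tilde a$ (note $\Psi_{\alpha,d}^{-1}$ is decreasing, so $\delta(\tilde a)\leq\delta(a)=\delta$, and the $\delta$-dependent weight in Theorem~\ref{heatkernel} is exactly what reproduces $\delta_+$), bounds both sides. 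Next I would expand by Duhamel,
\begin{align*}
  K_t^{V,\alpha}(x,y) = \int_0^t \ds \int_{\R^d} \dz\ \me{-(t-s)(|p|^\alpha+V)}(x,z)\,V(z)\,\me{-s|p|^\alpha}(z,y)\,,
\end{align*}
insert $|V(z)| \leq (|a|\vee|\tilde a|)\,|z|^{-\alpha}$, the upper bound $\me{-(t-s)(|p|^\alpha+V)}(x,z) \lesssim \me{-(t-s)\cl_{a,\alpha}}(x,z)$, and the heat kernel bound for $\me{-s|p|^\alpha}(z,y)$; this reduces the matter to estimating
\begin{align*}
  \int_0^t \ds \int_{\R^d} \dz\ \me{-(t-s)\cl_{a,\alpha}}(x,z)\,|z|^{-\alpha}\,\me{-s|p|^\alpha}(z,y)\,,
\end{align*}
which is, up to the constant $|a|\vee|\tilde a|$, one of the symmetric Duhamel representations of $K_t^\alpha(x,y)$ that is bounded in \cite[Lemma~3.1]{Franketal2018}. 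Running the same decomposition of the $(s,z)$-integral there --- according to whether $(|x|\vee|y|)^\alpha \lessgtr t$, whether $\tfrac12|x|\leq|y|\leq 2|x|$ or not, and the location of $z$ relative to $0$, $x$, $y$ --- then produces $L_t^{\alpha,\delta_+}(x,y) + M_t^\alpha(x,y)$.

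The step I expect to be the main obstacle is confirming that the cancellation responsible for the $M_t^\alpha$-term in \cite[Lemma~3.1]{Franketal2018}, active precisely in the regime $(|x|\vee|y|)^\alpha\geq t$ with $\tfrac12|x|\leq|y|\leq 2|x|$, is unaffected by the replacement of $a|x|^{-\alpha}$ by a general $V$ satisfying \eqref{eq:u}. The point is that this cancellation is driven only by the semigroup identity and by the two-sided pointwise \emph{size} of the perturbation, not by its homogeneity or the precise value of the constant, so one only has to re-read that portion of the argument and check that no use was made of scaling. Everything else --- including the degenerate case $a=0$, where $\cl_{a,\alpha}=|p|^\alpha$ and $\delta_+=0$, and the case $a<0$, where $\delta_+=\delta>0$ and the restriction $p\in(d/(d-\delta),d/\delta)$ is what makes the Schur integrals converge --- is identical to the proof of Proposition~\ref{reversehardylp}.
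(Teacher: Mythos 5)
Your overall reduction is correct: the triangle inequality in $\ell^2$, the $\ell^1\hookrightarrow\ell^2$ embedding, and the two Schur tests against $\sum_N N^{\alpha s/2}L_{N^{-\alpha}}^{\alpha,\delta_+}$ and $\sum_N N^{\alpha s/2}M_{N^{-\alpha}}^{\alpha}$ indeed go through verbatim once one has the pointwise bound $|K_t^{V,\alpha}(x,y)|\lesssim L_t^{\alpha,\delta_+}(x,y)+M_t^{\alpha}(x,y)$. You also correctly record the pointwise sandwich $0\leq \me{-t\cl_{\tilde a,\alpha}}(x,y)\leq \me{-t(|p|^\alpha+V)}(x,y)\leq \me{-t\cl_{a,\alpha}}(x,y)$ obtained from \eqref{eq:u} via Trotter. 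The difference is in what you do with it.

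Having that sandwich in hand, you should simply subtract each side from $\me{-t|p|^\alpha}(x,y)$: this reverses the inequalities and gives
\begin{align*}
K_t^{\alpha}(x,y)\;=\;\me{-t|p|^\alpha}(x,y)-\me{-t\cl_{a,\alpha}}(x,y)\;\leq\;K_t^{V,\alpha}(x,y)\;\leq\;\me{-t|p|^\alpha}(x,y)-\me{-t\cl_{\tilde a,\alpha}}(x,y)\,,
\end{align*}
so that $K_t^{V,\alpha}$ is pointwise squeezed between the two kernel differences already controlled by Lemma~\ref{differencekernel} with couplings $a$ and $\tilde a$ respectively. Since $\tilde\delta:=\Psi_{\alpha,d}^{-1}(\tilde a)\leq\delta$ and $L_t^{\alpha,\cdot}$ is nondecreasing in its exponent, one gets $|K_t^{V,\alpha}|\lesssim L_t^{\alpha,\tilde\delta_+}+L_t^{\alpha,\delta_+}+M_t^{\alpha}\lesssim L_t^{\alpha,\delta_+}+M_t^{\alpha}$, and you are done with no further analysis. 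This is exactly the paper's proof: two lines, no new estimates.

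Instead, you embark on a Duhamel expansion and insert $|V(z)|\leq(|a|\vee|\tilde a|)|z|^{-\alpha}$ inside the integral, with the aim of reproducing the analysis of \cite[Lemma~3.1]{Franketal2018}. This is the genuinely problematic part. First, it is unnecessary: the sandwich already yields the conclusion. Second, it is not actually complete as written. You yourself flag the obstacle, namely that the $M_t^{\alpha}$-term encodes a cancellation in the region $(|x|\vee|y|)^\alpha\geq t$, $|x|\sim|y|$, and you would have to re-inspect the entire derivation in \cite{Franketal2018} to confirm that inserting an absolute value on $V$ and replacing $a|z|^{-\alpha}$ by a merely comparable potential does not destroy it. Moreover, your reduction identifies the resulting integral with ``a Duhamel representation of $K_t^\alpha$'' only after dividing by $a$, which degenerates when $a=0$ (a case explicitly allowed by the hypotheses, where $\cl_{a,\alpha}=|p|^\alpha$, $K_t^\alpha\equiv0$, yet $V$ and the Duhamel integral need not vanish). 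So as stated, the Duhamel route leaves a real gap for $a=0$ and an unverified cancellation claim in general, both of which vanish once you use the sandwich on the difference directly rather than on the individual semigroup.
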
    

\begin{proof}
  Denoting
  $$
  \tilde K_t^\alpha(x,y) := e^{-t|p|^\alpha}(x,y) - e^{-t(|p|^\alpha +V)}(x,y)\,,
  $$
  Trotter's formula yields for any $x,y\in\R^d$ and $t>0$,
  $$
  e^{-t|p|^\alpha}(x,y) - e^{-t\cl_{a,\alpha}}(x,y) \leq 
  \tilde K_t^\alpha(x,y) \leq e^{-t|p|^\alpha}(x,y) - e^{-t\cl_{\tilde a,\alpha}}(x,y) \,.
  $$
  Since $\tilde \delta:=\Psi_{\alpha,d}^{-1}(\tilde a)\leq\delta$,
  Lemma \ref{differencekernel} with $a$ and $\tilde a$ implies
  $$
  |\tilde K_t^\alpha(x,y)| \lesssim L_t^{\alpha,\tilde\delta_+}(x,y) + L_t^{\alpha,\delta_+}(x,y) + M_t^\alpha(x,y)
  \lesssim L_t^{\alpha,\delta_+}(x,y) + M_t^\alpha(x,y)\,.
  $$
  Using this estimate, the assertion follows in the same way as
  Proposition \ref{reversehardylp}.
\end{proof}



\def\cprime{$'$}

\end{document}